\documentclass[12pt,reqno, final]{amsart}

\usepackage{amssymb,amsmath,graphicx,
	amsfonts,euscript}
\usepackage{color}
\usepackage{cite}
\allowdisplaybreaks

\setlength{\textheight}{8.8in} \setlength{\textwidth}{5.9in}
\setlength{\oddsidemargin}{0.2in} \setlength{\evensidemargin}{0.2in}
\setlength{\parindent}{0.2in}
\setlength{\topmargin}{0.2in} \setcounter{section}{0}
\setcounter{figure}{0} \setcounter{equation}{0}

\newcommand{\be}{\begin{eqnarray}}
\newcommand{\ee}{\end{eqnarray}}
\newcommand{\bes}{\begin{eqnarray*}}
\newcommand{\ees}{\end{eqnarray*}}

\newcommand{\om}{\omega}

\newcommand{\na}{\nabla}

\newcommand{\p}{\partial}

\newtheorem{thm}{Theorem}[section]
\newtheorem{pro}{Proposition}[section]
\newtheorem{lem}{Lemma}[section]

\setlength{\arraycolsep}{0.5mm}

\newcommand{\beq}{\begin{equation}}
\newcommand{\eeq}{\end{equation}}
\newcommand{\ben}{\begin{eqnarray}}
\newcommand{\een}{\end{eqnarray}}
\newcommand{\beno}{\begin{eqnarray*}}
\newcommand{\eeno}{\end{eqnarray*}}

\voffset=-0.2in
\numberwithin{equation}{section}
\subjclass[2010]{35Q35, 35Q86, 76D03, 76D50}
\keywords{Boussinesq equations; Hydrostatic balance; Partial dissipation; Stability}

\begin{document}
	
	\title[Stability problem on the 2D Boussinesq equations]{The stabilizing effect of the  temperature on buoyancy-driven fluids }
	
	\author[O. Ben Said, U. Pandey and J.  Wu ]{Oussama Ben Said$^{1}$,  Uddhaba Raj Pandey $^{2}$ and Jiahong Wu$^{3}$}
	
	\address{$^1$ Department of Mathematics, Oklahoma State University, Stillwater, OK 74078, USA}
	
	\email{obensai@ostatemail.okstate.edu}

	\address{$^{2}$ Department of Mathematics, Oklahoma State University, Stillwater, OK 74078, USA}
	
	\email{uddhaba@okstate.edu}
	
	\address{$^3$ Department of Mathematics, Oklahoma State University, Stillwater, OK 74078, USA}
	
	\email{jiahong.wu@okstate.edu}

	\vskip .2in
	\begin{abstract}
	The Boussinesq system for buoyancy driven fluids couples the momentum 
	equation forced by the buoyancy with the convection-diffusion equation for the temperature. 
	One fundamental issue on the Boussinesq system is the stability problem on perturbations
	near the hydrostatic balance. This problem can be extremely difficult when the system lacks
	full dissipation. This paper solves the stability problem for a two-dimensional 
	Boussinesq system with only vertical dissipation and horizontal thermal diffusion. We 
	establish the stability for the nonlinear system and derive precise large-time behavior for 
	the linearized system. The results presented in this paper reveal a remarkable phenomenon
	for buoyancy driven fluids. That is, the temperature actually smooths and stabilizes the
	fluids. If the temperature were not present, the fluid is governed by the 2D Navier-Stokes
	with only vertical dissipation and its stability remains open. It is the coupling and interaction between the temperature and the velocity in the Boussinesq system that makes
	the stability problem studied here possible. Mathematically the system can be reduced to degenerate and 
	damped wave equations that fuel the stabilization.	
	\end{abstract}
	\maketitle

\section{Introduction}

This paper intends to reveal and rigorously prove the fact that the temperature can actually have
a stabilizing effect on the buoyancy-driven fluids. As we know, buoyancy driven flows such as geophysical fluids and various Rayleigh-B\'{e}nard convection are modeled by the Boussinesq equations. Our study is based on the following special two-dimensional (2D) Boussinesq system
with partial dissipation 
\beq\label{bb}
\begin{cases}
\partial_t U + U\cdot \nabla U= -\nabla {P}+ \nu\, \partial_{22}U + \Theta {\mathbf e}_2, \quad x\in\mathbb R^2, \,\,t>0, \\
\partial_t \Theta + U \cdot \nabla \Theta = \eta\, \partial_{11}\Theta, \\
\nabla \cdot U=0, 
\end{cases}
\eeq
where $U$ denotes the fluid velocity, $P$ the pressure, $\Theta$ the temperature, $\nu>0$ the 
kinematic viscosity, and $\eta$ the thermal diffusivity. Here ${\mathbf e}_2$ is the unit vector in the vertical direction. The dissipation in the velocity equation is anisotropic and is only 
in the vertical direction. The partial differential equations (PDEs) with only degenrate dissipation are relevant in certain physical regimes, and one of the most notable examples is Prandtl's equation. Another reason for including only partial dissipation in the velocity equation is to help better reveal the smoothing and stabilization effect of the temperature. More precise explanation will be presented later. 

\vskip .1in 
The Boussinesq equations for buoyancy driven fluids are widely used in the modeling and 
study of atmospheric and oceanographic flows and the Rayleigh-B\'{e}nard convection
(see, e.g., \cite{ConD, DoeringG,Maj,Pe}). The Boussinesq equations are also mathematically
important. The 2D Boussinesq equations serve as a lower dimensional model of the 3D hydrodynamics equations. In fact, the 2D Boussinesq equations retain some key features of the 3D Euler and Navier-Stokes equations such as the vortex stretching mechanism.  The inviscid 2D Boussinesq equations can be identified as the Euler equations for the 3D axisymmetric swirling flows \cite{MaBe}. Fundamental issues on the Boussinesq systems such as the global well-posedness problem have attracted a lot of interests recently, especially when the systems involve only
partial dissipation or no dissipation at all (see, e.g., \cite{ACSW,ACW10,ACW11,ACWX,Nicki,CaoWu1,Ch,ChConWu, Chae-Nam1997,ChaeWu,CKY,ConVWu,Dai,DP2,DP3,ElJ,Tarek2,LBHe,HKR1,HKR2,HL,Hu,
	HuWangWu,JMWZ,JWYang,KRTW,KiTan,LaiPan,LLT,LSWu,LiTiti,MX,SaWu,SteWu,Wan,Wen,Wu,
	Wu_Xu,Wu_Xu_Xue_Ye,Wu_Xu_Ye,Xu0,YangJW,YangJW2,YeXu,Zhao2010}). The study on the stability of several  steady states to the Boussinesq system has recently gained momentum due to their physical applications. More details will be described in the later part of the introduction.

\vskip .1in 
The main purpose of this paper is to understand the stability and large-time behavior of 
perturbations near the hydrostatic equilibrium $(U_{he}, \Theta_{he})$ with
$$
U_{he} =0, \quad \Theta_{he} =x_2.
$$
For the static velocity $U_{he}$, the momentum equation is satisfied when the pressure 
gradient is balanced by the buoyancy force, namely 
$$
-\na P_{he} + \Theta_{he} \, \mathbf e_2 =0 \quad\mbox{or}\quad P_{he} = \frac12 x_2^2. 
$$
$(U_{he}, P_{he}, \Theta_{he})$ is a very special steady solution with great physical significance. 
In fact, our atmosphere is mostly in hydrostatic equilibrium with the upward pressure gradient 
force balanced by the buoyancy due to the gravity. 

\vskip .1in 
To understand the desired stability, we write the equation of the perturbation denoted by $(u, p, \theta)$, where
$$
u = U -U_{he}, \quad p= P- P_{he} \quad\mbox{and} \quad \theta = \Theta- \Theta_{he}. 
$$
It follows easily from (\ref{bb}) that the perturbation $(u, p, \theta)$ satisfies 
\beq\label{bb1}
    \begin{cases}
        \partial_t u+u\cdot \nabla u= -\nabla p+ \nu\, \partial_{22}u+ \theta \mathbf e_2, \\
        \partial_t \theta + u \cdot \nabla \theta + u_2= \eta\, \partial_{11} \theta, \\
        \nabla \cdot u=0, \\
        u(x,0) =u_0(x), \quad \theta(x,0) =\theta_0(x).    
    \end{cases}
\eeq
The only difference between (\ref{bb}) and (\ref{bb1}) is an extra term $u_2$ 
(the vertical component of $u$) in (\ref{bb1}), 
which plays a very important role in balancing the energy. In order to assess the stability, we need to establish that the solution $(u, b)$ of (\ref{bb1}) corresponding to any sufficiently small initial perturbation $(u_0, b_0)$ (measured in the Sobolev norm $H^2(\mathbb R^2)$) remains small for all time. This does not appear to be an easy task when there is only vertical 
velocity dissipation and horizontal thermal diffusion. 

\vskip .1in 
The lack of horizontal dissipation makes it hard to control the growth of the vorticity $\om = \na\times u$, which satisfies
\beq\label{vor}
\p_t \om + u\cdot\na \om = \nu\, \p_{22} \om + \p_1 \theta, \quad x\in \mathbb R^2, \, \,t>0.
\eeq
We can obtain a uniform bound on the $L^2$-norm of the vorticity $\om$ itself, but it does not 
appear possible to control the $L^2$-norm of the gradient of the vorticity, $\na\om$. If $\theta$ were identically zero, (\ref{vor}) becomes the 2D Navier-Stokes equation with degenerate dissipation, 
\beq\label{vor0}
\p_t \om + u\cdot\na \om = \nu\, \p_{22} \om, \quad x\in \mathbb R^2, \, \,t>0.
\eeq
(\ref{vor0}) always has a unique global solution $\om$ for any initial data $\om_0\in H^1(\mathbb R^2)$, but the issue of whether or not $\|\na \om(t)\|_{L^2}$ grows or decays as a function of $t$  remains an open problem. 
When $\nu=0$, (\ref{vor0}) becomes the 2D Euler vorticity equation 
$$
\p_t \om + u\cdot\na \om =0, \quad x\in \mathbb R^2, \, \,t>0.
$$
As demonstrated in several beautiful work (see, e.g., \cite{Den, Kis, Zla}), $\na\om(t)$ can grow even double exponentially in time. In contrast, solutions to the 2D Navier-Stokes equations with full dissipation 
$$
\p_t \om + u\cdot\na \om = \nu\, \Delta \om, \quad x\in \mathbb R^2, \, \,t>0
$$
have been shown to always decay in time (see, e.g., \cite{Sch0,Sch}). The lack of the horizontal dissipation in (\ref{vor0}) prevents us from mimicking the approach designed for the fully dissipative 
Navier-Stokes equations. In fact, when we estimate $\|\na \om(t)\|_{L^2}$, the issue is how to 
proceed from the energy equality 
$$
\frac12\,\frac{d}{dt} \|\na \om(t)\|_{L^2}^2 +  \nu \|\p_2 \na \om(t)\|_{L^2}^2 = - \int \na \om \cdot\na u\cdot \na \om\, dx. 
$$
It appears impossible to control the term on the right. In order to make use of the anisotropic 
dissipation, we can further decompose the nonlinearity into four component terms, 
\ben
\int \na \om \cdot\na u\cdot \na \om\, dx &=& \int \p_1 u_1 \, (\p_1 \om)^2\,dx + \int \p_1 u_2\, \p_1 \om\, \p_2 \om\,dx \label{hard}\\
&& + \int \p_2 u_1 \, \p_1 \om\, \p_2 \om\,dx + \int \p_2 u_2\, (\p_2\om)^2\,dx. \notag
\een 
However, the first two terms in (\ref{hard}) do not appear to admit suitable bounds due to the 
lack of control on the horizontal derivatives in the dissipation. Whether or not $\|\na \om(t)\|_{L^2}$ grows or decays in time remains an open problem.

\vskip .1in 
When we deal with the stability problem on (\ref{bb1}), we encounter exactly the same term in 
(\ref{hard}). How would it be possible to deal with the same difficulty when we now have a more complex system like (\ref{bb1})? It is the smoothing and stabilization effect of the temperature through 
the coupling and interaction that makes the stability problem on (\ref{bb1}) possible. We give a
quick explanation on this mechanism. Since the linear portion of the nonlinear system in
 (\ref{bb1}) plays a crucial role in the stability properties, we first eliminate the 
pressure term in (\ref{bb1}) to separate the linear terms from the nonlinear ones. Applying 
the Helmholtz-Leray projection $\mathbb P = I - \na\Delta^{-1}\na\cdot$ to the velocity equation yields
\beq\label{j8}
\p_t u = \nu \p_{22} u + \mathbb P (\theta \mathbf e_2) -\mathbb P(u\cdot\na u).
\eeq
By the definition of $\mathbb P$, 
\beq\label{j9}
\mathbb P (\theta \mathbf e_2) = \theta \mathbf e_2 - \na \Delta^{-1} \na\cdot (\theta \mathbf e_2) = \left[ \begin{array}{c} -\p_1 \p_2 \Delta^{-1} \theta \\ \theta - \p_2^2 \Delta^{-1} \theta\end{array}\right].
\eeq
Inserting (\ref{j9}) in (\ref{j8}) and writing (\ref{j8}) in terms of its component equations, we obtain 
\beq\label{newu}
\begin{cases}
	\p_t u_1 = \nu \, \p_{22} u_1 - \p_1\p_2 \Delta^{-1} \theta + N_1, \\
	\p_t u_2 = \nu \, \p_{22} u_2 + \p_1\p_1 \Delta^{-1} \theta + N_2,
\end{cases}
\eeq
where $N_1$ and $N_2$ are the nonlinear terms, 
$$
N_1 = -( u\cdot\na u_1 -\p_1 \Delta^{-1} \na\cdot(u\cdot\na u)), \quad 
N_2 = -( u\cdot\na u_2 -\p_2 \Delta^{-1} \na\cdot(u\cdot\na u)).
$$
By differentiating the first equation of (\ref{newu}) in $t$ yields 
$$
\p_{tt} u_1 = \nu \p_{22} \p_t u_1- \p_1\p_2 \Delta^{-1} \p_t \theta + \p_t N_1.
$$
Replacing $\p_t \theta$ by the equation of $\theta$, namely $\p_t \theta = \eta \, \p_{11} \theta - u_2 - u\cdot\na \theta$ gives 
$$
\p_{tt} u_1 = \nu \p_{22} \p_t u_1 + \p_1\p_2 \Delta^{-1} u_2 -\eta \, \p_{11} \p_1\p_2\Delta^{-1} \theta + \p_1 \p_2 \Delta^{-1}(u\cdot\na \theta) + \p_t N_1.
$$
By further replacing $\p_1\p_2\Delta^{-1} \theta$ by the first equation of (\ref{newu}), namely
$$
- \p_1\p_2 \Delta^{-1} \theta = \p_t u_1 -\nu \,\p_{22} u_1-N_1,
$$
we obtain 
\beno
\p_{tt} u_1 &=& \nu \p_{22} \p_t u_1 + \p_1\p_2 \Delta^{-1} u_2 + \eta \, \p_{11} (\p_t u_1 -\nu \,\p_{22} u_1-N_1)\\
&& + \,\p_1 \p_2 \Delta^{-1}(u\cdot\na \theta) + \p_t N_1,
\eeno
which leads to, due to the divergence-free condition $\p_2 u_2 = -\p_1 u_1$, 
\beq\label{u1t}
\p_{tt} u_1 -( \eta \p_{11} + \nu \p_{22})  \p_t u_1 + \nu \eta \p_{11} \p_{22} u_1 + \p_{11}\Delta^{-1} u_1 = N_3.
\eeq
Here $N_3$ contains the nonlinear terms, 
$$
N_3=(\p_t  -\eta \p_{11}) N_1 + \p_1 \p_2 \Delta^{-1}(u\cdot\na \theta).
$$
Through a similar process, $u_2$ and $\theta$ can be shown to satisfy 
\ben
&& \p_{tt} u_2 -( \eta \p_{11} + \nu \p_{22})  \p_t u_2 + \nu \eta \p_{11} \p_{22} u_2 + \p_{11}\Delta^{-1} u_2 = N_4, \label{u2t}\\
&& \p_{tt} \theta -( \eta \p_{11} + \nu \p_{22})  \p_t \theta + \nu \eta \p_{11} \p_{22} \theta + \p_{11}\Delta^{-1} \theta = N_5 \notag
\een
with 
\beno
&& N_4 = (\p_t -\eta \p_{11}) N_2 - \p_1 \p_1 \Delta^{-1}(u\cdot\na \theta),\\
&& N_5 = -(\p_t-\nu \p_{22})(u\cdot\na \theta) -N_2.
\eeno 
Combining (\ref{u1t}) and (\ref{u2t}) and rewriting them into the velocity vector form, we have 
converted (\ref{bb1}) into the following new system 
\beq\label{bb2}
\begin{cases}
	\p_{tt} u -( \eta \p_{11} + \nu \p_{22})  \p_t u + \nu \eta \p_{11} \p_{22} u + \p_{11}\Delta^{-1} u = N_6,\\
	\p_{tt} \theta -( \eta \p_{11} + \nu \p_{22})  \p_t \theta + \nu \eta \p_{11} \p_{22} \theta + \p_{11}\Delta^{-1} \theta = N_5,
\end{cases}
\eeq
where 
$$
N_6=(N_3, N_4) = -(\p_t -\eta \p_{11})\mathbb P(u\cdot\na u) + \na^\perp \p_1 \Delta^{-1}(u\cdot\na \theta)
$$
with $\na^\perp =(\p_2, -\p_1)$. By taking the curl of the velocity equation, we can also convert 
(\ref{bb2}) into a system of $\om$ and $\theta$, 
$$
\begin{cases}
	\p_{tt} \om -( \eta \p_{11} + \nu \p_{22})  \p_t \om + \nu \eta \p_{11} \p_{22} \om + \p_{11}\Delta^{-1} \om = N_7,\\
	\p_{tt} \theta -( \eta \p_{11} + \nu \p_{22})  \p_t \theta + \nu \eta \p_{11} \p_{22} \theta + \p_{11}\Delta^{-1} \theta = N_5,
\end{cases}
$$
where 
$$
N_7 = - (\p_t -\eta \p_{11})(u\cdot\na \om) -\p_1 (u\cdot\na \theta).
$$
Amazingly we have found that $u, \theta$ and $\om$ all satisfy the same damped degenerate wave 
equation only with different nonlinear terms. In comparison with the original system (\ref{bb1}), the new system of wave type equations in (\ref{bb2}) helps unearth all the smoothing and 
stabilization hidden in the original system. The velocity in (\ref{bb1}) involves only vertical dissipation, but the wave structure actually implies that the temperature can stabilize the fluids by creating the horizontal regularization via the coupling and interaction. 

\vskip .1in 
How much regularity and stabilization can the wave structure help create? Our very first effort 
is devoted to understanding this natural question. We focus on the linearized system 
\beq\label{nnn}
\begin{cases}
	\p_{tt} u -( \eta \p_{11} + \nu \p_{22})  \p_t u + \nu \eta \p_{11} \p_{22} u  + \p_{11}\Delta^{-1} u = 0,\\
\p_{tt} \theta -( \eta \p_{11} + \nu \p_{22})  \p_t \theta + \nu \eta \p_{11} \p_{22} \theta + \p_{11}\Delta^{-1} \theta = 0, \\
u(x,0) =u_0(x), \quad \theta(x,0) = \theta_0(x).
\end{cases}
\eeq
To maximally extract the regularity and damping effects from the wave structure, we represent  the solution of (\ref{nnn}) explicitly in terms of kernel functions and the initial data. The two components $u_1$ and $u_2$ of the velocity field have slightly different explicit representations.  

\begin{pro} \label{re}
The solution of (\ref{nnn}) can be explicitly represented as 
\ben
&& u_1(t) =K_1(t) \, u_{10}  + K_2(t)\, \theta_0, \label{tt}\\
&& u_2(t) = K_1(t) \, u_{20} + K_3(t)\, \theta_0, \label{tt0}\\
&& \theta(t) = K_4(t)\, u_{20}  + K_5(t)\,\theta_0, \label{tt1}
\een
where $K_1$ through $K_5$ are Fourier multiplier operators with their symbols given by 
\ben
&& \hskip -.2in K_1(\xi, t) = G_2(\xi, t) - \nu \xi_2^2 G_1(\xi,t), \qquad K_2(\xi,t) = -\frac{\xi_1\xi_2}{|\xi|^2}\, G_1(\xi,t),
\label{kk1}\\
&& \hskip -.2in K_3(\xi,t) = \frac{\xi_1^2}{|\xi|^2} \, G_1(\xi,t), \quad K_4 =- G_1, \quad K_5(\xi,t) = G_2(\xi, t) - \eta \xi_1^2 G_1(\xi,t).\label{kk2}
\een
Here $G_1$ and $G_2$ are two explicit symbols involving the roots $\lambda_1$ and $\lambda_2$ of the characteristic equation
$$
\lambda^2 + (\eta \xi_1^{2}+\nu \xi_2^{2})\lambda+ \nu \eta \xi_1^{2} \xi_2^2+  \frac{\xi_1^2}{|\xi|^2} = 0
$$
or 
$$
\begin{aligned}
& \lambda_1=-\frac{1}{2}(\eta \xi_1^{2}+\nu \xi_2^{2})-\frac{1}{2}\sqrt{(\eta \xi_1^{2}+\nu \xi_2^{2})^2-4\left(\nu \eta \xi_1^{2}\xi_2^2+  \frac{\xi_1^2}{|\xi|^2}\right)},  \\
& \lambda_2=-\frac{1}{2}(\eta \xi_1^{2}+\nu \xi_2^{2})+\frac{1}{2}\sqrt{(\eta \xi_1^{2}+\nu \xi_2^{2})^2-4\left(\nu \eta \xi_1^{2}\xi_2^2+  \frac{\xi_1^2}{|\xi|^2}\right)}.  
\end{aligned}
$$
More precisely, when $\lambda_1\not =\lambda_2$, 
\begin{equation}\label{g1}
G_1(\xi, t) = \frac{e^{\lambda_1 t}-e^{\lambda_2 t}}{\lambda_1- \lambda_2}, \qquad  
G_2(\xi, t) = \frac{\lambda_1 e^{\lambda_2 t}- \lambda_2 e^{\lambda_1t}}{\lambda_1 -\lambda_2}.
\end{equation}	
When $\lambda_1 =\lambda_2$,
\beq\label{g2}
G_1(\xi, t) = t e^{\lambda_1 t}, \qquad G_2(\xi, t) = e^{\lambda_1 t} - \lambda_1 t\, e^{\lambda_1 t}.
\eeq
\end{pro}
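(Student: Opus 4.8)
The plan is to diagonalize the system with the spatial Fourier transform, solve the resulting constant-coefficient second-order ODE in $t$ at each frequency $\xi$, and then match the two sets of initial data. Writing $\hat f(\xi,t)$ for the $x$-Fourier transform of $f$ and using $\p_1\mapsto i\xi_1$, $\p_2\mapsto i\xi_2$, $\Delta^{-1}\mapsto -|\xi|^{-2}$, each scalar wave operator in (\ref{nnn}) becomes the single ODE
\beq
\p_{tt}\hat f + (\eta\xi_1^2+\nu\xi_2^2)\,\p_t\hat f + \Big(\nu\eta\xi_1^2\xi_2^2 + \frac{\xi_1^2}{|\xi|^2}\Big)\hat f = 0,
\eeq
whose characteristic polynomial is precisely the quadratic in $\lambda$ stated in the proposition. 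Since $u_1$, $u_2$ and $\theta$ obey the \emph{same} scalar operator, they can differ only through their initial data; this is the structural fact that forces the three explicit formulas to share the building blocks $G_1$ and $G_2$.

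I would then record the two canonical fundamental solutions of this ODE: let $G_2(\xi,\cdot)$ and $G_1(\xi,\cdot)$ be the solutions normalized by $(G_2(0),\p_t G_2(0))=(1,0)$ and $(G_1(0),\p_t G_1(0))=(0,1)$. Solving the ODE through the roots $\lambda_1,\lambda_2$ gives the stated formulas (\ref{g1}) in the generic case $\lambda_1\neq\lambda_2$ and their confluent limit (\ref{g2}) when $\lambda_1=\lambda_2$; a one-line check at $t=0$ confirms the normalization. With it, the unique solution of the ODE with data $\hat f(0)=f_0$, $\p_t\hat f(0)=f_1$ is $\hat f(t)=G_2(\xi,t)\,f_0+G_1(\xi,t)\,f_1$.

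The crux is supplying the \emph{second} initial condition. System (\ref{nnn}) is second order in $t$ and so needs both $f(0)$ and $\p_t f(0)$, yet only $u_0,\theta_0$ are prescribed. The missing time derivatives are not free: they are fixed by requiring consistency at $t=0$ with the linearized first-order system out of which (\ref{nnn}) was built, namely the linear parts of (\ref{newu}) together with $\p_t\theta=\eta\p_{11}\theta-u_2$. Evaluating these in Fourier variables gives
\beno
\p_t\hat u_1(0)&=&-\nu\xi_2^2\,\hat u_{10}-\frac{\xi_1\xi_2}{|\xi|^2}\,\hat\theta_0,\\
\p_t\hat u_2(0)&=&-\nu\xi_2^2\,\hat u_{20}+\frac{\xi_1^2}{|\xi|^2}\,\hat\theta_0,\\
\p_t\hat\theta(0)&=&-\hat u_{20}-\eta\xi_1^2\,\hat\theta_0.
\eeno
Substituting each into $\hat f(t)=G_2 f_0+G_1\,\p_t\hat f(0)$ and collecting the coefficients of $\hat u_{10},\hat u_{20},\hat\theta_0$ produces exactly $K_1=G_2-\nu\xi_2^2 G_1$, $K_2=-\frac{\xi_1\xi_2}{|\xi|^2}G_1$, $K_3=\frac{\xi_1^2}{|\xi|^2}G_1$, $K_4=-G_1$ and $K_5=G_2-\eta\xi_1^2 G_1$, i.e.\ (\ref{kk1})--(\ref{kk2}); inverting the Fourier transform yields (\ref{tt})--(\ref{tt1}).

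I expect the only genuine subtlety to be the correct identification of these hidden initial derivatives: because the wave reformulation raises the order in $t$, one must return to the original first-order equations to pin down $\p_t u(0)$ and $\p_t\theta(0)$, and it is precisely these data that generate the off-diagonal couplings $K_2,K_3,K_4$ between velocity and temperature. Everything else is a direct computation on the Fourier side, with only the minor bookkeeping that the repeated-root formulas (\ref{g2}) arise as the limit of (\ref{g1}).
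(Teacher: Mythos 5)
Your proposal is correct and follows essentially the same route as the paper: the paper's Lemma 2.1 establishes the representation $\hat f(t)=G_2\,\hat f(0)+G_1\,\p_t\hat f(0)$ (via a factorization of the second-order operator into $(\p_t-\lambda_1(D))(\p_t-\lambda_2(D))$, rather than your direct Fourier-mode fundamental-system argument, but with the same $G_1,G_2$), and the proof of the proposition then supplies $\p_t u(0)$ and $\p_t\theta(0)$ from the original first-order linearized equations exactly as you do. Your identification of the hidden initial time derivatives as the crux, and your resulting kernels $K_1,\dots,K_5$, match the paper precisely.
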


\vskip .1in 
In order to understand the regularity and large-time behavior, we need to have precise 
upper bounds on the kernel functions $K_1$ through $K_5$. The behavior of these kernel functions 
depends crucially on the frequency $\xi$ and is nonhomogeneous. In addition, the bounds for 
these kernel functions are anisotropic and are not uniform in different directions. 
The details of these upper bounds and how they are derived are provided in Proposition \ref{432} in Section \ref{sec2}. 

\vskip .1in 
We are able to establish the precise large-time behavior of the solutions to (\ref{nnn}) using the upper bounds for the kernel functions $K_1$ through $K_5$ in Proposition \ref{432}. To reflect the anisotropic behavior of the solutions, we need to employ anisotorpic Sobolev type spaces. For $s\ge 0$ and $\sigma\ge 0$, the anisotropic Sobolev 
space $\mathring H_1^{s, -\sigma}(\mathbb R^2)$ consists of functions $f$ satisfying 
$$
\|f\|_{\mathring H_1^{s, -\sigma}(\mathbb R^2)} = \left(\int_{\mathbb R^2}|\xi|^{2s} \, |\xi_1|^{-2\sigma}  |\widehat{f}(\xi)|^2\,d\xi\right)^{\frac12} <\infty.
$$
Similarly, $\mathring H_2^{s, -\sigma}(\mathbb R^2)$ consists of functions $f$ satisfying 
$$
\|f\|_{\mathring H_2^{s, -\sigma}(\mathbb R^2)} = \left(\int_{\mathbb R^2}|\xi|^{2s} \, |\xi_2|^{-2\sigma}  |\widehat{f}(\xi)|^2\,d\xi\right)^{\frac12} <\infty.
$$
In addition, we write $\mathring H^{s, -\sigma}(\mathbb R^2) = \mathring H_1^{s, -\sigma}(\mathbb R^2)
\cap \mathring H_2^{s, -\sigma}(\mathbb R^2)$ with the norm given by 
$$
\|f\|_{\mathring H^{s, -\sigma}(\mathbb R^2)} =\|f\|_{\mathring H_1^{s, -\sigma}(\mathbb R^2)} + \|f\|_{\mathring H_2^{s, -\sigma}(\mathbb R^2)}.
$$

\begin{thm}\label{main1}
Consider the linearized system in (\ref{nnn}) with the initial data $u_0$ and $\theta_0$ satisfying $\na\cdot u_0 =0$ and 
$$
u_0 \in \mathring H^{0, -\sigma} \cap  \mathring H^{s, -\sigma} \cap \mathring H^{s-2, -\sigma}, \quad\theta_0 \in \mathring H^{0, -\sigma} \cap  \mathring H^{s, -\sigma} \cap \mathring H^{s-1, -\sigma},
$$
where $s\ge 0$ and $\sigma\ge 0$ satisfy $s+ \sigma \ge 2$. Then the corresponding solution $(u, \theta)$ to (\ref{nnn}) satisfies, for some constant $C>0$, 
\beno 
\|u_1(t)\|_{\mathring H^s} &\le& C\, t^{-\frac12 (s+\sigma)}\, \|u_{10}\|_{\mathring H^{0, -\sigma}} + C\, t^{-\frac{\sigma}{2}}\, \|u_{10}\|_{\mathring H^{s, -\sigma}}\\
&& + \,C\, t^{-\frac12 (s+\sigma) +1}\, \|\theta_0\|_{\mathring H^{0, -\sigma}}
+ C\,t^{-\frac12-\frac{\sigma}{2}}\, \|\theta_0\|_{\mathring H^{s-1, -\sigma}}, \\
\|u_2(t)\|_{\mathring H^s} &\le& C\, t^{-\frac12 (s+\sigma)}\, \|u_{20}\|_{\mathring H^{0, -\sigma}} + C\, t^{-\frac{\sigma}{2}}\, \|u_{20}\|_{\mathring H^{s, -\sigma}}\\
&& + \,C\, t^{-\frac12 (s+\sigma) +1}\, \|\theta_0\|_{\mathring H^{0, -\sigma}}
+ C\,t^{-1-\frac{\sigma}{2}}\, \|\theta_0\|_{\mathring H^{s, -\sigma}},\\
\|\theta(t)\|_{\mathring H^s} &\le& \,C\, t^{-\frac12 (s+\sigma) +1}\, \|u_{20}\|_{\mathring H^{0, -\sigma}}
+ C\,t^{-\frac{\sigma}{2}}\, \|u_{20}\|_{\mathring H^{s-2, -\sigma}}\\
&& + \,C\, t^{-\frac12 (s+\sigma)}\, \|\theta_0\|_{\mathring H^{0, -\sigma}} + C\, t^{-\frac{\sigma}{2}}\, \|\theta_0\|_{\mathring H^{s, -\sigma}},
\eeno 
where $\mathring H^s$ denotes the standard homogeneous Sobolev space with its norm defined by 
$$
\|f\|_{\mathring H^s} = \||\xi|^s \, |\widehat f(\xi)|\|_{L^2(\mathbb R^2)}.
$$
\end{thm}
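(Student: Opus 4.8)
The plan is to read off the solution from the explicit representation of Proposition~\ref{re} and then estimate each piece in frequency space. Writing $u_1(t)=K_1(t)u_{10}+K_2(t)\theta_0$, $u_2(t)=K_1(t)u_{20}+K_3(t)\theta_0$ and $\theta(t)=K_4(t)u_{20}+K_5(t)\theta_0$, and recalling that each $K_j$ acts as a Fourier multiplier, Plancherel's theorem reduces every norm on the left-hand side to a weighted $L^2$ integral,
\[
\|K_j(t)f\|_{\mathring H^s}^2=\int_{\mathbb R^2}|\xi|^{2s}\,|K_j(\xi,t)|^2\,|\widehat f(\xi)|^2\,d\xi,
\]
so the whole theorem becomes a matter of inserting the pointwise symbol bounds from Proposition~\ref{432} and summing the resulting contributions.

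The mechanism that produces the time decay is the conversion of the exponential-in-frequency damping built into $G_1,G_2$ into algebraic-in-time decay. The symbols decay like $e^{\,\mathrm{Re}\,\lambda_2\,t}$, and the effective rate $-\mathrm{Re}\,\lambda_2$ degenerates both as $\xi\to0$ and, more seriously, along the line $\xi_1=0$: there the wave-coupling term $\xi_1^2/|\xi|^2$ in the characteristic equation vanishes, one characteristic root becomes $0$, and the associated mode ceases to decay. I would exploit the elementary inequality $e^{-a t}\le C_\beta\,(a t)^{-\beta}$, valid for all $a,\beta,t>0$, to trade a factor $e^{-c\,(-\mathrm{Re}\,\lambda_2)\,t}$ for $C\,t^{-\beta}\,(-\mathrm{Re}\,\lambda_2)^{-\beta}$. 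Since the reciprocal of the rate is singular on the degeneracy set $\{\xi_1=0\}$ and at the origin, each power of $t^{-\beta}$ is purchased with a weight that is a negative power of $|\xi_1|$ (or of $|\xi|$, controlled through $|\xi_1|\le|\xi|$); integrating this against $|\widehat f|^2$ is exactly an anisotropic norm of the type $\mathring H_1^{\,\cdot,-\sigma}$, which is why these spaces appear and why no decay can be expected in the isotropic Sobolev scale alone.

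Carrying out the bookkeeping, the two summands in each displayed estimate come from the two ways of allocating the derivative weight $|\xi|^{2s}$ together with the inequality $|\xi_1|\le|\xi|$: folding all of $|\xi|^{2s}$ into the time decay yields the fast rate $t^{-\frac12(s+\sigma)}$ paired with the low-order norm $\mathring H^{0,-\sigma}$, whereas leaving $|\xi|^{2s}$ on the data yields the slow rate $t^{-\sigma/2}$ paired with $\mathring H^{s,-\sigma}$. The off-diagonal (coupling) terms carry the factor $G_1$ rather than $G_2$; since $G_1$ differs from $G_2$ by the denominator $\lambda_1-\lambda_2$, whose reciprocal adds one further inverse power of the degenerate rate, these terms lose exactly one power of time, which is the origin of the shifted exponents $t^{-\frac12(s+\sigma)+1}$ in the $\theta_0\mapsto u$ and $u_{20}\mapsto\theta$ contributions. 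Finally, the extra multiplier $\xi_1^2/|\xi|^2$ in $K_3$ against $\xi_1\xi_2/|\xi|^2$ in $K_2$ supplies one more factor of $\xi_1/|\xi|$ of smallness near $\xi_1=0$, which accounts for the faster rate and the shifted regularity index distinguishing the $u_2$ estimate from the $u_1$ estimate. In every case, matching the exponent $\beta$ so that the weighted frequency integral converges reduces to the standing hypothesis $s+\sigma\ge2$ and to the choice of the intersection spaces imposed on the data.

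The main obstacle is not this abstract scheme but the uniform and correct application of the symbol bounds of Proposition~\ref{432} across all frequency regimes. One must partition $\mathbb R^2$ according to the sign of the discriminant $(\eta\xi_1^2+\nu\xi_2^2)^2-4(\nu\eta\xi_1^2\xi_2^2+\xi_1^2/|\xi|^2)$ — that is, according to whether $\lambda_1,\lambda_2$ are real and well separated, complex conjugate, or nearly coincident — and verify that the representations \eqref{g1}--\eqref{g2} produce the same effective decay rate with constants that remain bounded across the transition set $\lambda_1=\lambda_2$, where the denominator $\lambda_1-\lambda_2$ vanishes but $G_1,G_2$ themselves extend smoothly. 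The genuinely delicate analysis is concentrated near the degeneracy set $\xi_1=0$ and the origin, where the rate vanishes; it is there that the negative weights $|\xi_1|^{-2\sigma}$ are simultaneously forced upon us, to render the integrals finite, and shown to suffice, under $s+\sigma\ge2$, to close every one of these contributions.
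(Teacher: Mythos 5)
Your overall strategy is exactly the paper's: represent the solution via the multipliers $K_1,\dots,K_5$ of Proposition \ref{re}, reduce to weighted $L^2$ integrals of the symbols by Plancherel, bound the symbols regionwise as in Proposition \ref{432}, and convert exponential frequency damping into algebraic time decay at the cost of negative anisotropic weights on the data. However, there is a genuine gap in your identification of where the decay rate degenerates, and it causes your scheme to fail on a whole region of frequency space. You locate the degeneracy only at the origin and on the line $\xi_1=0$, and accordingly you claim the weights are powers of $|\xi_1|^{-1}$ (or of $|\xi|^{-1}$, dominated by $|\xi_1|^{-1}$), landing in $\mathring H_1^{\,\cdot,-\sigma}$. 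But the slow root also degenerates along the \emph{other} axis at high frequency: for $\xi=(R,0)$ with $R$ large the characteristic equation is $\lambda^2+\eta R^2\lambda+1=0$, so $\lambda_2\approx -1/(\eta R^2)\to 0$. On the region $S_{21}=\{\xi\in S_2:\ |\xi_1|\ge|\xi_2|\}$ the usable lower bound on the rate is $-\lambda_2\gtrsim \xi_2^2+\xi_1^2|\xi|^{-4}$; trading the second term for $t^{-\sigma/2}$ costs a \emph{positive} power $|\xi|^{+\sigma}$ on the data, so the only admissible trade is $e^{-C\xi_2^2t}\le C\,t^{-\sigma/2}|\xi_2|^{-\sigma}$, which produces the $\mathring H_2^{s,-\sigma}$ component of the norm. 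This is precisely why the theorem's spaces are the intersections $\mathring H^{s,-\sigma}=\mathring H_1^{s,-\sigma}\cap\mathring H_2^{s,-\sigma}$ and why the paper splits $S_2$ into $S_{21}$ and $S_{22}$, paying $|\xi_2|^{-\sigma}$ on the former and $|\xi_1|^{-\sigma}$ on the latter. As written, your argument has no source of time decay near the $\xi_1$-axis at large $|\xi_1|$ and cannot close there.

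Two smaller points. First, the dichotomy you describe between the two summands in each estimate (``fold $|\xi|^{2s}$ into the time decay'' versus ``leave it on the data'') is not a free bookkeeping choice: it is dictated by the region. On $S_1$, and for the fast mode on $S_2$, the damping is a genuine heat kernel $e^{-c_0|\xi|^2t}$ and Lemma \ref{fb} yields $t^{-\frac12(s+\sigma)}\|f\|_{\mathring H^{0,-\sigma}}$; for the slow mode on $S_2$ the rate is only $\xi_1^2$ or $\xi_2^2$, so only $t^{-\sigma/2}$ is available and $|\xi|^{s}$ must stay on the data. Second, your explanation of the shift $t^{-\frac12(s+\sigma)+1}$ via ``the reciprocal of $\lambda_1-\lambda_2$ adds one inverse power of the rate'' is not how the loss arises on $S_1$, where $\lambda_1-\lambda_2$ may vanish; there the paper uses the mean value theorem (or the explicit $\sin$ formula in the complex-root case) to get $|G_1|\le t\,e^{-C|\xi|^2t}$, and the extra factor of $t$ is the source of the $+1$. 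On $S_2$, by contrast, $|\lambda_1-\lambda_2|\gtrsim|\xi|^2$ and the gain is the algebraic prefactor $|\xi|^{-2}$ in $K_2,K_3,K_4$, which is what lowers the regularity index of the data rather than the time exponent. Your sketch conflates these two mechanisms.
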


\vskip .1in 
Next we further exploit the effects of stabilizing and regularization of the wave structure 
through the energy method. By forming suitable Lyapunov functional and computing their time evolution, we are able to show that the frequencies away from the two axes in the frequency space decay exponentially 
to zero as $t\to \infty$. To state our result more precisely, we define a frequency cutoff function, for $a_1>0$ and $a_2>0$, 
\beq\label{phi}
\widehat{\varphi}(\xi)= \widehat{\varphi}(\xi_1, \xi_2)=
\begin{cases}
0,  &\quad \text{if} \quad |\xi_1| \leq a_1 \,\,\,\mbox{or}\,\, |\xi_2|\leq a_2, \\
1, &\quad \text{otherwise}.
\end{cases}
\eeq

\vskip .1in 
\begin{thm}\label{main2}
	Let $\nu>0$ and $\eta>0$. Consider the linearized system in (\ref{nnn}) or equivalently
	$$
	\begin{cases}
	\partial_t u_1=  \nu\, \partial_{22}u_1- \Delta ^{-1} \partial_{1}\partial_{2} \theta,  \\
	\partial_t u_2= \nu\, \partial_{22}u_2+ \Delta ^{-1} \partial_{1}\partial_{1} \theta,  \\
	\partial_t \theta = \eta\, \partial_{11} \theta- u_2,\\
	(u_1, u_2, \theta)(x,0) = (u_{01}, u_{02}, \theta_0). 
	\end{cases}
    $$
	 Let $(u, \theta)$ be the corresponding solution.  The Fourier frequency piece of $(u, \theta)$ away from the two axes of the frequency space decays exponentially in time to zero. 
	More precisely, if $(u_0, \theta_0) \in H^2(\mathbb R^2)$ with $\na\cdot u_0=0$, then there is constant $C_0 =C_0(\nu, \eta, a_1, a_2)$ such that, for all $t\ge 0$, 
	\ben
	&& \|\partial_{t}(\varphi * u)(t)\|_{L^{2}}^{2} + \|(\varphi * u)(t)\|_{H^1}^{2}
    \le \,C\, (\|\varphi * u_0\|_{H^{2}}^{2} + \|\varphi *\theta_0\|_{L^2}^2)\, e^{-C_0 t},\quad 
    \label{ss2}\\
	&& \|\partial_{t}(\varphi * \theta)(t)\|_{L^{2}}^{2} + \|(\varphi * \theta)(t)\|_{H^1}^{2}
	\le \,C\, (\|\varphi * \theta_0\|_{H^{2}}^{2} + \|\varphi *u_0\|_{L^2}^2)\, e^{-C_0 t},\quad 
	\label{ss3}
	\een
	where $\varphi$ is as defined in (\ref{phi}) and $C=C(\nu, \eta, a_1, a_2)>0$ is a constant. 
\end{thm}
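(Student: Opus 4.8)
The plan is to exploit the fact that, after convolving with the Fourier multiplier $\varphi$, each of $\varphi*u_1$, $\varphi*u_2$ and $\varphi*\theta$ still solves the scalar damped degenerate wave equation derived in the Introduction,
\[
\partial_{tt} w -(\eta\partial_{11}+\nu\partial_{22})\partial_t w+\nu\eta\,\partial_{11}\partial_{22} w+\partial_{11}\Delta^{-1}w=0,
\]
since $\varphi\,*$ commutes with every constant-coefficient Fourier multiplier appearing here. By Plancherel it then suffices to analyze, for each fixed frequency $\xi$ in the support of $\widehat\varphi$ (that is, $|\xi_1|\ge a_1$ and $|\xi_2|\ge a_2$), the scalar ODE
\[
\ddot{\widehat w}+d(\xi)\,\dot{\widehat w}+k(\xi)\,\widehat w=0,\qquad d=\eta\xi_1^2+\nu\xi_2^2,\quad k=\nu\eta\,\xi_1^2\xi_2^2+\frac{\xi_1^2}{|\xi|^2},
\]
whose characteristic polynomial is exactly the one in (\ref{nnn}). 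First I would record the coercivity facts that hold uniformly on the support of $\widehat\varphi$: $d\ge d_0:=\eta a_1^2+\nu a_2^2>0$; the elementary bound $2\xi_1^2\xi_2^2\ge a_1^2\xi_2^2+a_2^2\xi_1^2$ gives $k\ge c_1|\xi|^2$ and $k\ge k_0>0$; and, crucially, $k/d\ge \nu\eta\xi_1^2\xi_2^2/(\eta\xi_1^2+\nu\xi_2^2)\ge c_0>0$ with $c_0=(\,(\eta a_1^2)^{-1}+(\nu a_2^2)^{-1}\,)^{-1}$. All constants depend only on $\nu,\eta,a_1,a_2$.

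Next I would introduce the modal Lyapunov functional
\[
\mathcal E(\xi,t)=\tfrac12|\dot{\widehat w}|^2+\tfrac12 k|\widehat w|^2+\beta\,\mathrm{Re}\big(\widehat w\,\overline{\dot{\widehat w}}\big),
\]
with a small constant $\beta=\beta(\nu,\eta,a_1,a_2)>0$. For $\beta\le\tfrac12 k_0^{1/2}$ one has $\mathcal E\asymp |\dot{\widehat w}|^2+k|\widehat w|^2$. A direct computation using the ODE gives
\[
\frac{d}{dt}\mathcal E=-(d-\beta)|\dot{\widehat w}|^2-\beta d\,\mathrm{Re}\big(\widehat w\,\overline{\dot{\widehat w}}\big)-\beta k|\widehat w|^2,
\]
and Young's inequality applied to the indefinite middle term, absorbing it into the other two, yields $\frac{d}{dt}\mathcal E\le-\tfrac{d-\beta}{2}|\dot{\widehat w}|^2-\tfrac{\beta}{2}k|\widehat w|^2$ as soon as $2\beta d\le k$ on the support; this is exactly where $k/d\ge c_0$ enters, so I would fix $\beta=\min(c_0/2,\,d_0/2,\,\tfrac12 k_0^{1/2})$. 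Since $d\ge d_0$, this produces the modal decay $\frac{d}{dt}\mathcal E\le -C_0\,\mathcal E$ with a rate $C_0=C_0(\nu,\eta,a_1,a_2)>0$ uniform in $\xi$.

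Setting $E(t):=\int\mathcal E\,d\xi$ over the support of $\widehat\varphi$ and applying Grönwall yields $E(t)\le E(0)e^{-C_0 t}$. By Plancherel together with the bounds $k_0\le k$ and $c_1|\xi|^2\le k$, the functional $E(t)$ is comparable to $\|\partial_t(\varphi*w)(t)\|_{L^2}^2+\|(\varphi*w)(t)\|_{H^1}^2$, while from $k\lesssim 1+\xi_1^2\xi_2^2$ one gets $E(0)\lesssim\|\partial_t(\varphi*w)(0)\|_{L^2}^2+\|\varphi*w_0\|_{H^2}^2$. Applying this with $w=\varphi*u_1,\varphi*u_2$ and summing gives (\ref{ss2}), and with $w=\varphi*\theta$ gives (\ref{ss3}). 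The last step is to express the initial time derivatives through the first-order system: $\partial_t(\varphi*u_j)(0)$ is a zeroth/second-order multiplier applied to $\varphi*u_0$ and $\varphi*\theta_0$, whence $\|\partial_t(\varphi*u)(0)\|_{L^2}\lesssim\|\varphi*u_0\|_{H^2}+\|\varphi*\theta_0\|_{L^2}$, and likewise $\|\partial_t(\varphi*\theta)(0)\|_{L^2}\lesssim\|\varphi*\theta_0\|_{H^2}+\|\varphi*u_0\|_{L^2}$, producing exactly the mixed right-hand sides.

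I expect the main obstacle to be obtaining a decay rate $C_0$ that is \emph{uniform} over the unbounded frequency support, which reduces to the single inequality $\inf\,k/d>0$ on $\{|\xi_1|\ge a_1,\ |\xi_2|\ge a_2\}$. Near $\xi_1=0$ one has $k\to0$ while $d$ stays bounded, and along $\xi_2\to0,\ \xi_1\to\infty$ one has $k\to1$ while $d\to\infty$; in either case $k/d\to0$ and the slower root of the characteristic equation tends to $0$. Excising neighborhoods of \emph{both} axes through $\varphi$ is precisely what keeps $k/d$ bounded below, and hence what upgrades the mode-by-mode decay into a uniform exponential rate.
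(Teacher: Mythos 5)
Your proof is correct and follows essentially the same route as the paper: both build a Lyapunov functional equal to the natural wave energy plus a small multiple of the cross term $\mathrm{Re}\,(w\,\overline{\partial_t w})$, use the cutoff $|\xi_1|\ge a_1$, $|\xi_2|\ge a_2$ to make the degenerate coefficients $d=\eta\xi_1^2+\nu\xi_2^2$ and $k=\nu\eta\xi_1^2\xi_2^2+\xi_1^2|\xi|^{-2}$ uniformly coercive, and recover the initial time derivative from the first-order system. The only (cosmetic) difference is that you run the argument mode-by-mode in Fourier space, whereas the paper performs the equivalent computation with integrated $L^2$ quantities obtained by dotting the wave equation with $\partial_t(\varphi*u)$ and with $\varphi*u$.
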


\vskip .1in 
We now turn our attention to the main result of this paper, the nonlinear stability on (\ref{bb1}). As we have explained before, the major obstacle is  how to obtain 
a suitable upper bound on the nonlinear term from the momentum equation, namely (\ref{hard}). 
This is the main reason why the stability problem on the 2D Navier-Stokes equations with 
only one-directional dissipation remains open. However, for the coupled nonlinear system in 
(\ref{bb1}), the smoothing and stabilizing effect of the temperature on the fluid velocity makes
the nonlinear stability possible. In fact, we are able to prove the following theorem. 
\begin{thm} \label{main3}
Consider (\ref{bb1}) with $\nu>0$ and $\eta>0$. Assume the initial data $(u_0, \theta_0)$ is in $H^2(\mathbb R^2)$ with $\na\cdot u_0=0$. Then there exists 
$\varepsilon =\varepsilon(\nu, \eta) >0$ such that, if $(u_0, \theta_0)$ satisfies
$$
\| u_0\|_{H^2}+\| \theta_0\|_{H^2}\leq \varepsilon,
$$
then (\ref{bb1}) has a unique global solution $(u, \theta)$ satisfying, for any $t>0$, 
\beno
&& \|u(t)\|_{H^2}^{2}+ \|\theta(t)\|_{H^2}^{2} 
+ \nu\, \int_{0}^{t} \|\partial_2 u\|_{H^2}^{2}\,d\tau \\
&& \qquad   + \eta \, \int_{0}^t \|\partial_1 \theta\|_{H^2}^{2} \, d\tau + C(\nu, \eta)\,   \int_{0}^t \|\partial_1 u_2\|_{L^2}^{2}\, d\tau \leq \, C\; \varepsilon^2,
\eeno
where $C(\nu, \eta)>0$ and $C>0$ are constants.
\end{thm}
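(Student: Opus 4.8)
The plan is to prove Theorem \ref{main3} by a bootstrap (continuation) argument built on anisotropic energy estimates. Assuming local well-posedness of (\ref{bb1}) in $H^2(\mathbb R^2)$, which follows from standard arguments since the partial dissipation suffices for short-time existence, I would work with the energy
$$\mathcal E(t) = \sup_{0\le\tau\le t}\big(\|u(\tau)\|_{H^2}^2 + \|\theta(\tau)\|_{H^2}^2\big) + \nu\int_0^t\|\p_2 u\|_{H^2}^2\,d\tau + \eta\int_0^t\|\p_1\theta\|_{H^2}^2\,d\tau + C(\nu,\eta)\int_0^t\|\p_1 u_2\|_{L^2}^2\,d\tau,$$
and aim to establish the a priori inequality $\mathcal E(t)\le C\,\mathcal E(0) + C\,\mathcal E(t)^{\frac32}$. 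Once this is in hand, choosing $\varepsilon$ small enough and running the usual continuation argument forces $\mathcal E(t)\le C\varepsilon^2$ for all $t$, which is exactly the asserted bound.

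The first ingredient is the basic $L^2$ balance: pairing the $u$-equation with $u$ and the $\theta$-equation with $\theta$, the buoyancy contribution $\int\theta\,u_2$ and the coupling term coming from $-u_2$ cancel exactly, while the advection terms vanish by $\na\cdot u=0$, leaving $\|u\|_{L^2}^2+\|\theta\|_{L^2}^2$ controlled together with the dissipation $\nu\|\p_2 u\|_{L^2}^2+\eta\|\p_1\theta\|_{L^2}^2$. I would then apply $\p^\alpha$ for $|\alpha|\le 2$ and pair with $\p^\alpha u$ and $\p^\alpha\theta$. The linear coupling cancels at every order for the same reason, so the only terms left to estimate are the nonlinear ones $-\int\p^\alpha(u\cdot\na u)\cdot\p^\alpha u$ and $-\int\p^\alpha(u\cdot\na\theta)\,\p^\alpha\theta$. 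After a Leibniz expansion and integration by parts, the most dangerous contributions are precisely those already isolated in (\ref{hard}): the pieces that would require horizontal control of the velocity gradient, which the vertical dissipation alone cannot supply.

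The decisive new ingredient is the extra dissipation $\int_0^t\|\p_1 u_2\|_{L^2}^2$, which encodes the stabilizing role of the temperature and is needed only at the lowest order. To produce it I would introduce the cross functional $\int_{\mathbb R^2}\p_1 u_2\,\p_1\theta\,dx$ and differentiate it in time: using $\p_t\theta=\eta\p_{11}\theta-u_2-u\cdot\na\theta$, the term $-u_2$ generates exactly $-\|\p_1 u_2\|_{L^2}^2$, while the remaining contributions (involving $\p_{11}\Delta^{-1}\theta$, $\nu\p_{22}u_2$, and the nonlinearities) are all dominated by $\eta\|\p_1\theta\|_{H^2}^2$, $\nu\|\p_2 u\|_{H^2}^2$ and small multiples of $\mathcal E$. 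Since this functional is itself bounded by $\mathcal E(t)$, adding a small multiple of it to the $H^2$ energy keeps the modified energy equivalent to $\mathcal E(t)$ and yields the term $C(\nu,\eta)\int_0^t\|\p_1 u_2\|_{L^2}^2$. Crucially, the divergence-free condition gives $\p_1 u_1=-\p_2 u_2$, so that $\|\p_1 u_1\|_{L^2}\le\|\p_2 u\|_{L^2}$; combined with the new bound on $\|\p_1 u_2\|_{L^2}$, this recovers full time-integrated control of the horizontal gradient $\|\p_1 u\|_{L^2}$, restoring at the lowest order the dissipation that (\ref{bb1}) appears to lack.

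The main obstacle is closing the $H^2$ estimate against the hard nonlinear terms in (\ref{hard}). My plan is to bound them with anisotropic Sobolev inequalities of the type $\int|fgh|\,dx\le C\|f\|_{L^2}^{\frac12}\|\p_1 f\|_{L^2}^{\frac12}\|g\|_{L^2}^{\frac12}\|\p_2 g\|_{L^2}^{\frac12}\|h\|_{L^2}$, which let me assign to each factor a derivative matching an available dissipation: $\p_2$ on $u$, $\p_1$ on $\theta$, and the recovered horizontal control of $u$ through $\p_1 u_1=-\p_2 u_2$ and the extra $\p_1 u_2$ bound. The term carrying $\p_1 u_1$ is first rewritten via incompressibility so that it is driven by the vertical dissipation, and the term carrying $\p_1 u_2$ is matched to the newly created dissipative quantity; each resulting product is then estimated by $\mathcal E^{\frac12}$ times dissipative factors and absorbed using Young's inequality together with the smallness of $\mathcal E$. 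I expect the delicate bookkeeping of exponents in these anisotropic estimates—verifying that every piece of (\ref{hard}) is genuinely dominated by the three available dissipation mechanisms and that the whole nonlinear contribution scales like $\mathcal E^{\frac32}$—to be where essentially all the difficulty lies; the temperature estimates are analogous but easier, since horizontal thermal diffusion directly controls $\p_1\theta$.
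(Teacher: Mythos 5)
Your proposal follows essentially the same route as the paper: the same energy functional with the extra $\delta\int_0^t\|\p_1 u_2\|_{L^2}^2\,d\tau$ term generated through the cross quantity $\int \p_1\theta\,\p_1 u_2\,dx$ and the $\theta$-equation, the same anisotropic triple-product inequality to close the higher-order estimates, the same use of $\p_1 u_1=-\p_2 u_2$ to recover horizontal control of the velocity, and the same bootstrapping conclusion. The only cosmetic difference is that the paper runs the top-order estimate through the vorticity $(\na\om,\Delta\theta)$ rather than generic $\p^\alpha$ with $|\alpha|\le 2$; the substance of the argument is identical.
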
 

In order to prove Theorem \ref{main3}, we need to exploit the extra regularization due to the 
wave structure in (\ref{bb2}). In particular, the 
control on the time integral of the horizontal derivative of the velocity field, namely
\beq\label{cru}
\int_0^t \|\p_1 u(\tau)\|_{L^2}^2\,d\tau 
\eeq
plays a crucial role in the proof. Clearly the uniform boundedness of (\ref{cru}) is 
not a consequence of the vertical dissipation in the velocity equation but due to the interaction
with the temperature equation. We use the bootstrapping argument to prove the boundedness 
of (\ref{cru}) and the stability of the solution simultaneously. A general statement on the bootstrapping principle can be found in \cite[p.21]{Tao2006}. To achieve this goal, we first 
construct a suitable energy functional 
\ben
E(t) &=& \max_{0\leq \tau \leq t} (\|u(\tau)\|_{H^2}^{2} +\|\theta(\tau)\|_{H^2}^{2})  + 2 \nu \int_{0}^{t} \|\partial_2 u\|_{H^2}^{2}d\tau \notag\\
&& + 2\eta \int_{0}^t \|\partial_1 \theta\|_{H^2}^{2} d\tau + \delta \int_{0}^{t}\|\partial_1 u_2\|_{L^2}^{2}\, d\tau, \label{ee}
\een
where $\delta>0$ is a suitably selected parameter. We then show that $E(t)$ satisfies 
\beq\label{ine}
E(t) \le C\, E(0) + C\, E(t)^{\frac32}.
\eeq
Our main efforts are devoted to proving (\ref{ine}). In particular, we need to estimate the difficult term (\ref{hard}). A suitable upper bound can now be achieved due to the inclusion 
of (\ref{cru}) in the energy function. $\delta>0$ is chosen to be sufficiently small so that 
some of the terms generated in the estimating of (\ref{cru}) can be majorized by the dissipative terms. We leave more technical details on how to bound (\ref{hard}) and other terms to Section \ref{sec4}. In order to take advantage of the anisotropic 
dissipation, the estimates are performed via anisotropic tools including an anisotropic triple product upper bound as stated in the following lemma taken from \cite{caowu2011}. 
\begin{lem}\label{tri}
	Assume that $f$, $g$, $\p_2 g$, $h$ and $\p_1h$ are all in $L^2(\mathbb{R}^2)$. Then,
	for some constant $C>0$,  
	$$ \int_{\mathbb R^2} |fgh|\,dx \leq C \|f\|_{L^2}\|g\|_{L^2}^{\frac{1}{2}} \|\p_2g\|_{L^2}^{\frac{1}{2}} \|h\|_{L^2}^{\frac{1}{2} }\|\p_1h\|_{L^2}^{\frac{1}{2}}.$$
\end{lem}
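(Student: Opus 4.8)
The plan is to reduce this two-dimensional bound to one-dimensional Agmon-type inequalities applied separately in each coordinate direction, exploiting the anisotropy by letting $g$ carry the vertical derivative $\p_2$ and $h$ carry the horizontal derivative $\p_1$, while $f$ stays in $L^2$. The key tool is the elementary one-dimensional estimate coming from the fundamental theorem of calculus: for a sufficiently nice function $\phi$ on $\R$, writing $\phi(x)^2 = 2\int_{-\infty}^{x} \phi\,\phi'\,ds$ and applying Cauchy-Schwarz gives $\|\phi\|_{L^\infty(\R)}^2 \le 2\,\|\phi\|_{L^2(\R)}\,\|\phi'\|_{L^2(\R)}$.

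First I would apply this in the $x_1$ variable to $h$, so that for each fixed $x_2$,
$$\sup_{x_1}|h(x_1,x_2)|^2 \le 2\,\|h(\cdot,x_2)\|_{L^2_{x_1}}\,\|\p_1 h(\cdot,x_2)\|_{L^2_{x_1}} =: H(x_2)^2,$$
and symmetrically in the $x_2$ variable to $g$, so that for each fixed $x_1$,
$$\sup_{x_2}|g(x_1,x_2)|^2 \le 2\,\|g(x_1,\cdot)\|_{L^2_{x_2}}\,\|\p_2 g(x_1,\cdot)\|_{L^2_{x_2}} =: G(x_1)^2.$$
Next I would bound the triple integral by replacing $g$ and $h$ with these one-variable majorants and separating $f$ by Cauchy-Schwarz:
$$\int_{\R^2}|fgh|\,dx \le \int_{\R^2}|f(x_1,x_2)|\,G(x_1)\,H(x_2)\,dx \le \|f\|_{L^2}\left(\int_{\R^2} G(x_1)^2 H(x_2)^2\,dx\right)^{\frac12}.$$
Since the integrand now factors, Fubini yields $\int_{\R^2} G^2 H^2\,dx = \big(\int_\R G(x_1)^2\,dx_1\big)\big(\int_\R H(x_2)^2\,dx_2\big)$, and a final Cauchy-Schwarz in the remaining single variable converts each factor into a product of full two-dimensional $L^2$ norms, namely $\int_\R G(x_1)^2\,dx_1 \le 2\,\|g\|_{L^2}\,\|\p_2 g\|_{L^2}$ and $\int_\R H(x_2)^2\,dx_2 \le 2\,\|h\|_{L^2}\,\|\p_1 h\|_{L^2}$. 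Combining these gives the claimed inequality with $C = 2$.

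There is no genuine obstacle here; this is a standard anisotropic Ladyzhenskaya-type estimate, and the argument above is elementary. The only point demanding care is the bookkeeping of the anisotropy: $g$ must be the factor differentiated in $x_2$ and $h$ the factor differentiated in $x_1$, matching the directions in which the dissipation of the Boussinesq system actually supplies control (vertical for the velocity, horizontal for the temperature). A routine density argument reduces the general case to smooth, rapidly decaying $f$, $g$, $h$, ensuring that all integrals above are finite and that the boundary terms in the fundamental theorem of calculus vanish.
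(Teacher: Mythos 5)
Your proof is correct and follows essentially the same route as the source: the paper does not prove this lemma itself but quotes it from \cite{caowu2011}, where the argument is exactly the slice-wise one-dimensional Agmon inequality $\|\phi\|_{L^\infty(\R)}^2\le 2\|\phi\|_{L^2(\R)}\|\phi'\|_{L^2(\R)}$ applied to $h$ in $x_1$ and to $g$ in $x_2$, followed by Cauchy--Schwarz and Fubini. Your bookkeeping of which factor carries which derivative is right, the final Cauchy--Schwarz in the remaining single variable is the correct way to recover the full two-dimensional norms, and the constant $C=2$ checks out.
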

Once (\ref{ine}) is established, the bootstrapping argument then implies that, if $E(0)$ is sufficiently small or equivalently 
$$
\|(u_0, \theta_0)\|_{H^2} \le \varepsilon
$$
for some sufficiently small $\varepsilon>0$, then $E(t)$ remains uniformly small for all time, 
namely 
$$
E(t)  \le C\, \varepsilon^2
$$
for a constant $C>0$ and for all $t\ge 0$. Details on the application of the bootstrapping argument will be provided in the proof of Theorem \ref{main3} in Section \ref{sec4}. 

\vskip .1in 
Finally we remark that, due to its importance in 
geophysics and astrophysics, the stability problem on the hydrostatic balance has recently attracted considerable interests. When 
the Boussinesq system does not involve full kinematic dissipation and thermal diffusion, 
the stability problem can be extremely difficult. Several recent work has made 
progress. Doering, Wu, Zhao and Zheng \cite{DWZZ} solved the stability problem 
on the 2D Boussinesq 
system with full velocity dissipation but without thermal diffusion in a bounded domain 
with stress-free boundary condition. A follow-up work by Tao, Wu, Zhao and Zheng \cite{TWZZ} 
was able
to establish the precise large-time behavior of the stable solutions obtained in \cite{DWZZ}. Castro, Cordoba and Lear \cite{CCL} investigated the stability problem of the 2D Boussinesq system when 
the velocity involves a damping term and obtained the asymptotic stability for a trip domain.
We also mention that the study on the stability problem on the Boussinesq equations near the shear flow, another physically important steady state, has also gained momentum (see \cite{DWZ, TW, Zil}). 

\vskip .1in 
The rest of this paper is naturally divided into three sections. Section \ref{sec2} provides 
the proofs of  of Proposition \ref{re} and Theorem \ref{main1}. Section \ref{sec3} proves Theorem \ref{main2} while Section \ref{sec4} presents the proof of Theorem \ref{main3}.

\vskip .3in
\section{Proofs of Proposition \ref{re} and Theorem \ref{main1}}
\label{sec2}

This section is devoted to the proofs of Proposition \ref{re} and Theorem \ref{main1}.
Proposition \ref{re} represents the solution to the linearized system in (\ref{nnn}) in 
terms of the initial data and several kernel functions. Its proof relies
on a lemma that solves the degenerate damped wave equation explicitly. 
The decay estimates in Theorem \ref{main1} are based on the upper bounds for 
the kernel functions in the representation of solutions obtained in Proposition \ref{re}. 
The upper bounds are derived in Proposition \ref{432} prior to the proof of Theorem \ref{main1}. 

\begin{lem}\label{55}
	Assume that $f$ satisfies the damped degenerate wave type equation 
     \beq\label{ww}
     \begin{cases}
	\partial_{tt}f-(\nu \partial_{22}+\eta \partial_{11}) \partial_t f+ \eta \nu \partial_{11}\partial_{22}f + \p_{11}\Delta^{-1} f=F, \\
	f(x,0)= f_0(x), \quad (\partial_t f)(x,0) = f_1(x).
	\end{cases}
	\eeq
	Then f can be explicitly represented as
	\begin{equation}\label{wavetypef}
	f(t) = G_1(t)\, f_1 + G_2(t) \,f_0 + \int_0^t G_1(t-\tau)\,F(\tau)\,d\tau,
	\end{equation}
	where $G_1$ and $G_2$ are two Fourier multiplier operators with their symbols given by 
	\begin{equation}\label{la}
	G_1(\xi, t) = \frac{e^{\lambda_1 t}-e^{\lambda_2 t}}{\lambda_1- \lambda_2}, \qquad  
	G_2(\xi, t) = \frac{\lambda_1 e^{\lambda_2 t}- \lambda_2 e^{\lambda_1t}}{\lambda_1 -\lambda_2}
	\end{equation}	
	with $\lambda_1$ and $\lambda_2$ being the roots of the characteristic equation
	\beq\label{ch}
\lambda^2 + (\eta \xi_1^{2}+\nu \xi_2^{2})\lambda+ \nu \eta \xi_1^{2} \xi_2^2+  \frac{\xi_1^2}{|\xi|^2} = 0
	\eeq
	or 
	\beq\label{ll}
	\begin{aligned}
	& \lambda_1=-\frac{1}{2}(\eta \xi_1^{2}+\nu \xi_2^{2})-\frac{1}{2}\sqrt{(\eta \xi_1^{2}+\nu \xi_2^{2})^2-4\left(\nu \eta \xi_1^{2}\xi_2^2+  \frac{\xi_1^2}{|\xi|^2}\right)},  \\
	& \lambda_2=-\frac{1}{2}(\eta \xi_1^{2}+\nu \xi_2^{2})+\frac{1}{2}\sqrt{(\eta \xi_1^{2}+\nu \xi_2^{2})^2-4\left(\nu \eta \xi_1^{2}\xi_2^2+  \frac{\xi_1^2}{|\xi|^2}\right)}.  
	\end{aligned}
	\eeq
	When $\lambda_1= \lambda_2$, (\ref{wavetypef}) remains valid if we replace  $G_1$ and $G_2$ in (\ref{la}) by their corresponding limit form, namely, 
	$$
	G_1({\xi, t})=\lim_{\lambda_2 \to \lambda_1} \frac{e^{\lambda_1 t}-e^{\lambda_2 t}}{\lambda_1- \lambda_2}= te^{\lambda_1 t}
	$$
	and 
	$$
	G_2({\xi, t})=\lim_{\lambda_2 \to \lambda_1}\frac{\lambda_1 e^{\lambda_2 t}- \lambda_2 e^{\lambda_1t}}{\lambda_1 -\lambda_2} = e^{\lambda_1t}- \lambda_1t e^{\lambda_1t}. 
	$$
\end{lem}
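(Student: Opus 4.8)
The plan is to diagonalize the spatial operators by taking the Fourier transform in $x$, which turns (\ref{ww}) into a decoupled family of second-order ODEs in $t$, one for each frequency $\xi$. First I would record the Fourier symbols of the spatial operators on the left-hand side: $\p_{11}\mapsto -\xi_1^2$, $\p_{22}\mapsto -\xi_2^2$, $\Delta\mapsto -|\xi|^2$, so that $\eta\nu\,\p_{11}\p_{22}\mapsto \eta\nu\xi_1^2\xi_2^2$ and $\p_{11}\Delta^{-1}\mapsto \xi_1^2/|\xi|^2$. Writing $\hat f(\xi,t)$ for the spatial Fourier transform of $f$, equation (\ref{ww}) becomes, for each fixed $\xi$,
$$
\partial_{tt}\hat f + (\eta\xi_1^2+\nu\xi_2^2)\,\partial_t\hat f + \Big(\eta\nu\xi_1^2\xi_2^2 + \tfrac{\xi_1^2}{|\xi|^2}\Big)\hat f = \hat F,
$$
a linear constant-coefficient ODE in $t$ whose characteristic polynomial is exactly (\ref{ch}), with roots $\lambda_1,\lambda_2$ given by (\ref{ll}).

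Next, assuming $\lambda_1\neq\lambda_2$, I would exhibit the two fundamental solutions explicitly. A direct check shows that $G_1$ in (\ref{la}) satisfies $G_1(\xi,0)=0$ and $\partial_t G_1(\xi,0)=1$, while $G_2$ satisfies $G_2(\xi,0)=1$ and $\partial_t G_2(\xi,0)=0$; both annihilate the homogeneous operator since $e^{\lambda_1 t}$ and $e^{\lambda_2 t}$ do. Hence $G_2(\xi,t)\hat f_0 + G_1(\xi,t)\hat f_1$ is the unique solution of the homogeneous ODE matching the transformed Cauchy data $\hat f_0,\hat f_1$. For the forcing I would invoke Duhamel's principle: since $G_1$ is precisely the impulse response (zero value, unit derivative at $t=0$), the particular solution with vanishing initial data is $\int_0^t G_1(\xi,t-\tau)\hat F(\xi,\tau)\,d\tau$. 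Differentiating this integral twice in $t$ and using $G_1(\xi,0)=0$, $\partial_t G_1(\xi,0)=1$ together with the fact that $G_1$ solves the homogeneous equation confirms that it reproduces exactly $\hat F$ while leaving the initial data unchanged. Superposing these pieces and taking the inverse Fourier transform yields (\ref{wavetypef}).

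Finally I would treat the degenerate case $\lambda_1=\lambda_2$, which occurs precisely on the zero set of the discriminant in (\ref{ll}). There the two exponentials coalesce and the standard repeated-root fundamental system $\{e^{\lambda_1 t},\, t\,e^{\lambda_1 t}\}$ takes over; passing to the limit $\lambda_2\to\lambda_1$ in (\ref{la}) (by L'Hopital's rule, or a Taylor expansion of $e^{\lambda_2 t}$ about $\lambda_1$) gives $G_1=t\,e^{\lambda_1 t}$ and $G_2=e^{\lambda_1 t}-\lambda_1 t\,e^{\lambda_1 t}$, the forms stated in the lemma, and the same Cauchy-data and Duhamel verification goes through verbatim.

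I do not anticipate a genuine obstacle here: the argument is a frequency-by-frequency reduction to a scalar ODE. The only points demanding care are bookkeeping ones, namely getting the sign of the symbol of $\p_{11}\Delta^{-1}$ right (it equals $+\xi_1^2/|\xi|^2$, which is bounded and vanishes on the axis $\xi_1=0$, so the multiplier is harmless), checking that $G_1,G_2$ carry the correct initial values and derivatives, and verifying consistency of the $\lambda_1=\lambda_2$ limit. Since the lemma is a representation formula, these verifications are purely algebraic and the inverse transform may be taken in the sense of tempered distributions, so no function-space restrictions on $f_0,f_1,F$ are needed at this stage.
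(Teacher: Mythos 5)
Your argument is correct and proves the lemma. It differs from the paper's derivation only in how the homogeneous solution is obtained: you diagonalize by the Fourier transform and then \emph{verify} that $G_1,G_2$ form the fundamental system adapted to the Cauchy data (checking $G_1(\xi,0)=0$, $\partial_t G_1(\xi,0)=1$, $G_2(\xi,0)=1$, $\partial_t G_2(\xi,0)=0$ and that both are combinations of $e^{\lambda_1 t}, e^{\lambda_2 t}$), whereas the paper \emph{derives} the formula by factoring the second-order operator as $(\partial_t-\lambda_1(D))(\partial_t-\lambda_2(D))f=0$ and $(\partial_t-\lambda_2(D))(\partial_t-\lambda_1(D))f=0$, introducing $g=(\partial_t-\lambda_2(D))f$ and $h=(\partial_t-\lambda_1(D))f$, solving the resulting first-order equations, and recovering $f$ from $(\lambda_1(D)-\lambda_2(D))f=g-h$. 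The two are equivalent in substance (the paper's $\lambda_i(D)$ are exactly your frequency-by-frequency roots promoted to multiplier operators), and both handle the forcing by Duhamel. Your route is slightly more economical and makes the initial-value bookkeeping explicit, which the paper leaves implicit in the computation of $g(0)$ and $h(0)$; the paper's factorization has the mild advantage of producing the formula without guessing the ansatz. Your remarks on the sign of the symbol of $\partial_{11}\Delta^{-1}$, on the repeated-root limit, and on interpreting the inverse transform distributionally are all accurate and consistent with the paper.
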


\vskip .1in 
\begin{proof}[Proof of Lemma \ref{55}] We first focus on the case when $F\equiv 0$. Since $\lambda_1(\xi)$ and $\lambda_2(\xi)$ are the roots of the characteristic equation in (\ref{ch}), 
we can decompose the second-order differential operator as follows, 
\begin{equation}\label{dd1}
(\partial_t - \lambda_1(D)) (\partial_t - \lambda_2(D)) f= 0 
\end{equation}
and 
\begin{equation}\label{dd2}
(\partial_t - \lambda_2(D)) (\partial_t - \lambda_1(D)) f= 0,
\end{equation}
where $\lambda_1(D)$ and $\lambda_1(D)$ are the Fourier multiplier operators with their symbols given by $\lambda_1(\xi)$ and $\lambda_2(\xi)$, or 
$$
\begin{aligned}
& \lambda_1(D)=\frac{1}{2}(\nu \partial_{22}+ \eta \partial_{11})-\frac{1}{2}\sqrt{(\nu \partial_{22}+ \eta \partial_{11})^2-4(\nu \eta \partial_{1122}+  \partial_{11}\Delta^{-1})},  \\
& \lambda_2(D)=\frac{1}{2}(\nu \partial_{22}+ \eta \partial_{11})+\frac{1}{2}\sqrt{(\nu \partial_{22}+ \eta \partial_{11})^2-4(\nu \eta \partial_{1122}+ \partial_{11}\Delta^{-1})}.  
\end{aligned}
$$
We can rewrite (\ref{dd1}) and (\ref{dd2}) into two systems 
\begin{equation}\label{firstorderfg}
\begin{cases}
(\partial_t - \lambda_1(D)) g= 0, \\
(\partial_t - \lambda_2(D)) f= g 
\end{cases}
\end{equation}
and 
\begin{equation}\label{firstorderfh}
\begin{cases}
(\partial_t - \lambda_2(D)) h= 0, \\
(\partial_t - \lambda_1(D)) f= h.
\end{cases}
\end{equation}
By taking the difference of the second equations of (\ref{firstorderfg}) and (\ref{firstorderfh}), we obtain
$$
(\lambda_1(D)- \lambda_2(D)) f= g-h
$$
or 
\begin{equation}\label{differencegh}
f= ((\lambda_1(D)- \lambda_2(D)))^{-1} (g-h).
\end{equation}
Solving the first equations of (\ref{firstorderfg}) and (\ref{firstorderfh}) yields,
\begin{equation}\label{initialg}
g(t)= g(0)\, e^{\lambda_1(D)\, t} =  ( (\partial_t f)(0)- \lambda_2(D)f(0)) \, e^{\lambda_1(D)t}
\end{equation}
and 
\begin{equation}\label{initialh}
h(t)= h(0)\, e^{\lambda_2(D)\, t} =  ( (\partial_t f)(0)- \lambda_1(D)f(0)) \, e^{\lambda_2(D)t},
\end{equation}
where we have used second equations of (\ref{firstorderfg}) and (\ref{firstorderfh}) to obtain the initial data $g(0)$ and $h(0)$. Inserting (\ref{initialg}) and (\ref{initialh}) in (\ref{differencegh}) leads to
$$
\begin{aligned}
f(t) &= (\lambda_1(D)- \lambda_2(D))^{-1}\;\Big((e^{\lambda_1(D)t}- e^{\lambda_2(D)t})\;(\partial_t f)(0) \\
& \quad +(\lambda_1(D) e^{\lambda_2(D)t}-\lambda_2(D) e^{\lambda_1(D)t})\,  f(0)\Big) \\
&= G_1\,f_1+  G_2\,f_0,
\end{aligned}
$$
where 
$$
G_1 = \frac{e^{\lambda_1(D)t}- e^{\lambda_2(D)t}}{\lambda_1(D)-\lambda_2(D)},\qquad 
G_2 =  \frac{\lambda_1(D)e^{\lambda_2(D)t}-\lambda_2(D) e^{\lambda_2(D)t}}{\lambda_1(D)-\lambda_2(D)}.
$$
When $F$ in (\ref{ww}) is not identically zero, the formula in (\ref{wavetypef}) is obtained by Duhamel's principle. This completes the proof of Lemma \ref{55}.
\end{proof}

\vskip .1in 
We are now ready to prove Proposition \ref{re}.
\begin{proof}[Proof of Proposition \ref{re}] This is a direct consequence of Lemma \ref{55}. In fact, according to Lemma \ref{55}, 
\beq\label{xz}
u(t) = G_2(t)\, u_0 + G_1(t)\, (\p_t u)(x,0), \quad 
\theta(t) = G_2(t)\, \theta_0 +  G_1(t)\, (\p_t \theta)(x,0).
\eeq
Since $u$ and $\theta$ satisfy the original linearized equations, 
\beno
&& \p_t u_1 = \nu\, \p_{22} u_1 -  \p_1\p_2 \Delta^{-1} \theta, \\
&& \p_t u_2 = \nu\, \p_{22} u_2 +   \p_{11} \Delta^{-1} \theta,\\
&&\p_t \theta = \eta\, \p_{11} \theta - u_2,
\eeno
we obtain 
\beno
&& (\p_t u_1)(x,0) = \nu\, \p_{22} u_{10}-  \p_1 \p_2 \Delta^{-1} \theta_0,\\
&& (\p_t u_2(x,0) = \nu\, \p_{22} u_{20} +   \p_{11} \Delta^{-1} \theta_0,\\
&& (\p_t \theta)(x,0) = \eta\,\p_{11} \theta_0 - u_{20}.
\eeno
Inserting them  in (\ref{xz}), we obtain 
\beno
&& u_1(t) = (G_2(t)\,+ \nu \p_{22} G_1)\, u_{10} - \p_1\p_2 \Delta^{-1}\,G_1\, \theta_0, \\
&& u_2(t) = (G_2(t)\,+ \nu \p_{22} G_1)\, u_{20} +  \p_{11} \Delta^{-1} \, G_1\, \theta_0, \\
&& \theta(t) = -G_1\, u_{20} + (G_2 + \eta \p_{11} G_1) \theta_0,
\eeno
which are the representations in (\ref{tt}), (\ref{tt0}) and (\ref{tt1}). This completes the proof of Proposition \ref{re}. 
\end{proof}

\vskip .1in 
In order to prove Theorem \ref{main1}, we need to understand the behavior 
of the kernel functions $K_1$ through $K_5$. Clearly their behavior depends on the frequency $\xi$. In order to obtain a definite behavior for each kernel function, we need to divide
the whole frequency space $\mathbb R^2$ into subdomains. The following proposition specifies 
these subdomains and the behavior of the kernel functions. 
\begin{pro}\label{432}
	Assume the kernel functions $K_1$ through $K_5$ are given by (\ref{kk1}) and (\ref{kk2}) with $G_1$ and $G_2$ defined in (\ref{g1}) and (\ref{g2}). Set 
	\beno
	&& S_1 = \left\{\xi =(\xi_1, \xi_2)\in \mathbb R^2, \, \nu \eta \xi_1^2\,\xi_2^2 + \xi_1^2\,|\xi|^{-2} \ge \frac3 {16} (\nu \xi_2^2 + \eta \xi_1^2)^2 \right\},\\
	&& S_2 = \mathbb R^2 \setminus S_1.
	\eeno 
	The kernel functions $K_1$ through $K_5$ can then be bounded as follows. 
	\begin{enumerate} 
	\item[(a)] Let $\xi\in S_1$. Then 
	$$
	Re \lambda_1 \le -\frac12(\nu \xi_2^2 + \eta \xi_1^2), \qquad 	Re  \lambda_2 \le -\frac14(\nu \xi_2^2 + \eta \xi_1^2),
	$$
	where $Re$ denotes the real part, and, for constants $c_0>0$ and $C>0$, 
	\ben
	&& |K_1(\xi,t)|, |K_5(\xi,t)|\le \,C\, e^{-c_0 |\xi|^2 t}, \label{gb1} \\
	&& |K_2(\xi,t)| , |K_3(\xi,t)|,  |K_4(\xi,t)|\le \,C\, t\, e^{-c_0 |\xi|^2 t}. \label{gb2}
	\een
	
	\vskip .1in 
	\item[(b)] Let $\xi\in S_2$. Then 
	$$
	\lambda_1 \le -\frac34(\nu \xi_2^2 + \eta \xi_1^2), \qquad \lambda_2 \le -\frac{\nu \eta \xi_1^2 \xi_2^2 + \xi_1^2 |\xi|^{-2}}{\nu \xi_2^2 + \eta \xi_1^2},
	$$
	\beq\label{gb3}
	|K_1|, \, |K_5| \le C\, e^{-\frac34(\nu \xi_2^2 + \eta \xi_1^2) t} + C\, e^{-\frac{\nu \eta \xi_1^2 \xi_2^2 + |\xi_1|^2|\xi|^{-2}}{\nu \xi_2^2 + \eta \xi_1^2} t}
	\eeq
	and 
	 \ben
  &&	|K_2| \le \frac{C|\xi_1||\xi_2|}{|\xi|^4} \,e^{-c_0 \,|\xi|^2 t} + \frac{C|\xi_1||\xi_2|}{|\xi|^4} \, e^{-c_0 \frac{\xi_1^2 \xi_2^2}{|\xi|^2} t}\, e^{-c_0\,\frac{\xi_1^2}{|\xi|^4} t}, \label{gb4}\\
  &&	|K_3| \le \frac{C|\xi_1|^2}{|\xi|^4} \,e^{-c_0 \,|\xi|^2 t} + \frac{C|\xi_1|^2}{|\xi|^4} \, e^{-c_0 \frac{\xi_1^2 \xi_2^2}{|\xi|^2} t}\, e^{-c_0\,\frac{\xi_1^2}{|\xi|^4} t},\notag\\
&& |K_4| \le \frac{C}{|\xi|^2} \,e^{-c_0 \,|\xi|^2 t} + \frac{C}{|\xi|^2} \, e^{-c_0 \frac{\xi_1^2 \xi_2^2}{|\xi|^2} t}\, e^{-c_0\,\frac{\xi_1^2}{|\xi|^4} t}.  \notag
	\een
	\end{enumerate}
\end{pro}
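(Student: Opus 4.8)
The plan is to extract all the bounds from the two roots $\lambda_1,\lambda_2$ of the characteristic equation (\ref{ch}). Abbreviating $b=\eta\xi_1^{2}+\nu\xi_2^{2}$ and $c=\nu\eta\xi_1^{2}\xi_2^{2}+\xi_1^{2}|\xi|^{-2}$, so that $\lambda_{1,2}=\tfrac12\big(-b\mp\sqrt{b^{2}-4c}\big)$, I would first record three facts used throughout. (i) The coefficient $b$ is comparable to the frequency: $\min(\nu,\eta)\,|\xi|^{2}\le b\le\max(\nu,\eta)\,|\xi|^{2}$. (ii) Whether or not $\lambda_1=\lambda_2$, the kernel $G_1$ has the integral representation $G_1(\xi,t)=\int_0^t e^{\lambda_1 s+\lambda_2(t-s)}\,ds$, so that $|G_1|\le\int_0^t e^{(\mathrm{Re}\,\lambda_1)s+(\mathrm{Re}\,\lambda_2)(t-s)}\,ds$. (iii) From Vieta's relations $\lambda_1+\lambda_2=-b$ and $\lambda_1\lambda_2=c$ one gets the identities $G_2=\tfrac12\big(e^{\lambda_1 t}+e^{\lambda_2 t}\big)+\tfrac{b}{2}G_1$ and $(\lambda_1+\nu\xi_2^{2})(\lambda_2+\nu\xi_2^{2})=\xi_1^{2}|\xi|^{-2}=(\lambda_1+\eta\xi_1^{2})(\lambda_2+\eta\xi_1^{2})$, together with the two equivalent forms $K_1=e^{\lambda_2 t}-(\lambda_2+\nu\xi_2^{2})G_1$ and $K_1=\tfrac12(e^{\lambda_1 t}+e^{\lambda_2 t})+\tfrac12(\eta\xi_1^{2}-\nu\xi_2^{2})G_1$ (with the analogous forms for $K_5$). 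It is the exact cancellation inside $G_2-\nu\xi_2^{2}G_1$ that separates the faster kernels $K_1,K_5$ (no $t$ prefactor) from $K_2,K_3,K_4$ (a $t$ prefactor).

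On $S_1$ the hypothesis $c\ge\tfrac{3}{16}b^{2}$ forces $b^{2}-4c\le\tfrac14 b^{2}$, so the roots are either complex conjugates or close real roots; in both cases $\mathrm{Re}\,\lambda_1\le-\tfrac12 b$ and $\mathrm{Re}\,\lambda_2\le-\tfrac14 b$, which is part (a). Using (i), each $e^{(\mathrm{Re}\,\lambda_j)t}\le e^{-c_0|\xi|^2 t}$, and then the representation in (ii) gives $|G_1|\le t\,e^{-c_0|\xi|^2 t}$. Since $K_2,K_3,K_4$ equal $G_1$ times a symbol bounded by $1$ (namely $|\xi_1\xi_2||\xi|^{-2}$, $\xi_1^{2}|\xi|^{-2}$, or $1$), the estimate (\ref{gb2}) is immediate. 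For $K_1,K_5$ I would bound $G_2$ and the correction separately: $|G_2|\le C e^{-c_0|\xi|^2 t}$ from the identity in (iii), while $\nu\xi_2^{2}|G_1|\le b\,t\,e^{-c_0|\xi|^2 t}\le C e^{-c_0'|\xi|^2 t}$ after absorbing the factor $bt$ into half of the exponent; this removes the $t$ and yields (\ref{gb1}).

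On $S_2=\mathbb R^2\setminus S_1$ the roots are real and well separated: $b^2-4c>\tfrac14 b^2$ gives $\sqrt{b^2-4c}\in(\tfrac12 b,\,b]$, hence $\lambda_1<-\tfrac34 b$ and $|\lambda_1-\lambda_2|\ge\tfrac12 b$. The decisive estimate for the slow root is $\lambda_2\le -c/b$, which I would prove by squaring the equivalent inequality $\sqrt{b^2-4c}\le b-2c/b$ (both sides nonnegative because $c<\tfrac{3}{16}b^2$) and noting that the leftover term is the nonnegative $4c^2/b^2$. From $|\lambda_1-\lambda_2|\ge\tfrac12 b$ the formula $G_1=(e^{\lambda_1 t}-e^{\lambda_2 t})/(\lambda_1-\lambda_2)$ gives $|G_1|\le\tfrac{C}{b}\big(e^{\lambda_1 t}+e^{\lambda_2 t}\big)$; multiplying by the bounded symbols and using $b^{-1}\le C|\xi|^{-2}$ turns the $e^{\lambda_1 t}\le e^{-\frac34 b t}$ piece into the fast $e^{-c_0|\xi|^2 t}$ term of (\ref{gb4}) and its analogues, while the slow piece comes from $e^{\lambda_2 t}\le e^{-(c/b)t}$ after splitting $c/b=\tfrac{\nu\eta\xi_1^2\xi_2^2}{b}+\tfrac{\xi_1^2|\xi|^{-2}}{b}$ and bounding each denominator by $b\le\max(\nu,\eta)|\xi|^2$ to recover the product $e^{-c_0\xi_1^2\xi_2^2|\xi|^{-2}t}\,e^{-c_0\xi_1^2|\xi|^{-4}t}$. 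For $K_1,K_5$ I would instead use $K_1=e^{\lambda_2 t}-(\lambda_2+\nu\xi_2^{2})G_1$: here $|\lambda_2+\nu\xi_2^{2}|\le b$ and $|\lambda_1-\lambda_2|^{-1}\le 2/b$, so both $e^{\lambda_1 t}\le e^{-\frac34 b t}$ and $e^{\lambda_2 t}\le e^{-(c/b)t}$ enter with $O(1)$ coefficients, giving exactly (\ref{gb3}) (and $K_5$ follows with $\eta\xi_1^2$ in place of $\nu\xi_2^2$).

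I expect the genuine difficulty to lie entirely in the anisotropic two-rate decay on $S_2$: the slow exponential $e^{\lambda_2 t}$ must be split correctly between the factors $\xi_1^2\xi_2^2|\xi|^{-2}$ and $\xi_1^2|\xi|^{-4}$, which needs the sharp bound $\lambda_2\le -c/b$ rather than a crude $\lambda_2\le 0$, and it requires all constants $c_0,C$ to be controlled by $\nu,\eta$ uniformly in $\xi$, including near the two axes and near the origin where $b$, $\lambda_2$ and the prefactors degenerate at different rates. By comparison the $S_1$ estimates are soft once part (a) and the integral representation of $G_1$ are in hand.
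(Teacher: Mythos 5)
Your proposal is correct and follows essentially the same route as the paper: the same $S_1/S_2$ dichotomy, the same root bounds $\mathrm{Re}\,\lambda_1\le-\tfrac12 b$, $\mathrm{Re}\,\lambda_2\le-\tfrac14 b$ on $S_1$ and $\lambda_1\le-\tfrac34 b$, $\lambda_2\le-c/b$ on $S_2$, the bound $|G_1|\le \tfrac{2}{b}(e^{\lambda_1t}+e^{\lambda_2t})$ from $|\lambda_1-\lambda_2|\ge\tfrac12 b$, and the same splitting of $c/b$ into the two anisotropic rates. The only cosmetic differences are that you treat the real and complex cases on $S_1$ uniformly via the integral representation of $G_1$ (the paper splits $S_1$ into $S_{11}$ and $S_{12}$ and uses the mean-value theorem and the sine formula respectively) and you derive $\lambda_2\le -c/b$ by squaring rather than by rationalizing.
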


\begin{proof}
	To prove the bounds in (a), we further divide $S_1$ into two subsets,
	\beno
	&& S_{11} = \left\{\xi \in S_1, (\nu \xi_2^2 + \eta \xi_1^2)^2 \ge 4 (\nu \eta \xi_1^2 \xi_2^2 + |\xi_1|^2|\xi|^{-2}) \right\}, \\
	&& S_{12} = S_1\setminus S_{11}.
	\eeno
	For any $\xi \in S_{11}$, 
	$$
	0\le (\nu \xi_2^2 + \eta \xi_1^2)^2 -4 (\nu \eta \xi_1^2 \xi_2^2 + |\xi_1|^2|\xi|^{-2}) 
	\le \frac14 (\nu \xi_2^2 + \eta \xi_1^2)^2.
	$$
	According to the formula for $\lambda_1$ and $\lambda_2$ in (\ref{ll}), $\lambda_1$ and $\lambda_2$ are real and satisfy
	$$
	\lambda_1 \le - \frac12 (\nu  \xi_2^2 + \eta \xi_1^2), \qquad \lambda_2 \le -\frac14 (\nu  \xi_2^2 + \eta \xi_1^2).
	$$
	By the mean-value theorem, for a constant $C>0$, 
	\beq\label{g1b}
	|G_1| = \left|\frac{e^{\lambda_1 t} -e^{\lambda_2 t}}{\lambda_1 -\lambda_2} \right|
	\le t \, e^{- C\, |\xi|^2 t}.
	\eeq
    Writing $G_2$ in (\ref{g1}) as 
    $$
    G_2 =e^{\lambda_1 t} - \lambda_1 G_1
    $$
    and using the simple fact that $x^m \, e^{-x} \le C(m)$ for any $x\ge 0$ and $m\ge 0$, we can 
    bound $K_1$ and $K_5$ as follows, 
    \beno
    |K_1| &\le& |G_2| + \nu |\xi_2^2|\, |G_1| \le e^{-c_0 |\xi|^2 t} + C\, |\xi|^2 \,t \, e^{-C\, |\xi|^2 t} + \nu |\xi_2^2|\, t \, e^{-C\, |\xi|^2 t} \\
    &\le& C\, e^{-c_0 |\xi|^2 t},\\
    |K_5| &\le&  |G_2| + \eta |\xi_1^2|\, |G_1| \le C\, e^{-c_0 |\xi|^2 t},
    \eeno
    where $C>0$ and $c_0>0$ are constants. The bounds $K_2$,  $K_3$ and $K_4$ 
    follow directly from (\ref{g1b}). For $\xi\in S_{12}$, 
    $$
    (\nu \xi_2^2 + \eta \xi_1^2)^2 < 4 (\nu \eta \xi_1^2 \xi_2^2 + |\xi_1|^2|\xi|^{-2})
    $$
    and, as a consequence, $\lambda_1$ and $\lambda_2$ are complex numbers,
    \beno
    && \lambda_1 = -\frac12(\nu \xi_2^2 + \eta \xi_1^2) - \frac{i}{2}\, \sqrt{4 (\nu \eta \xi_1^2 \xi_2^2 + |\xi_1|^2|\xi|^{-2}) - (\nu \xi_2^2 + \eta \xi_1^2)^2},\\
    &&  \lambda_2 = -\frac12(\nu \xi_2^2 + \eta \xi_1^2) + \frac{i}{2}\, \sqrt{4 (\nu \eta \xi_1^2 \xi_2^2 + |\xi_1|^2|\xi|^{-2}) - (\nu \xi_2^2 + \eta \xi_1^2)^2}.
    \eeno 
    Then 
    $$
    Re\, \lambda_1 = Re\, \lambda_2 = -\frac12(\nu \xi_2^2 + \eta \xi_1^2).
    $$
    In addition, 
    \beno
    |G_1| &=& \left|\frac{e^{\lambda_1 t} -e^{\lambda_2 t}}{\lambda_1 -\lambda_2} \right| \\
    &=& e^{-\frac12(\nu \xi_2^2 + \eta \xi_1^2) t}\, \left|\frac{\sin\left(t \sqrt{4 (\nu \eta \xi_1^2 \xi_2^2 + |\xi_1|^2|\xi|^{-2}) - (\nu \xi_2^2 + \eta \xi_1^2)^2}\right)}{\sqrt{4 (\nu \eta \xi_1^2 \xi_2^2 + |\xi_1|^2|\xi|^{-2}) - (\nu \xi_2^2 + \eta \xi_1^2)^2}}\right| \\
    &\le& t\, e^{-\frac12(\nu \xi_2^2 + \eta \xi_1^2) t}.
    \eeno
    The desired upper bounds for $K_1$ through $K_5$ then follow as before. 
    
    \vskip .1in 
    We now prove the bounds in (b). For $\xi \in S_2$, 
    \beq\label{0b}
    (\nu \xi_2^2 + \eta \xi_1^2)^2 -4 (\nu \eta \xi_1^2 \xi_2^2 + |\xi_1|^2|\xi|^{-2}) \ge \frac14 (\nu \xi_2^2 + \eta \xi_1^2)^2.
    \eeq
    Then $\lambda_1$ and $\lambda_2$ are both real. Clearly,  $\lambda_1$ satisfies
    \beq\label{11b}
    \lambda_1 \le -\frac34(\nu \xi_2^2 + \eta \xi_1^2).
    \eeq
    To obtain the upper bound for $\lambda_2$, we try to make the terms in the representation of $\lambda_2$ have the same sign and obtain 
    \ben
    \lambda_2 &=& -\frac12\left((\nu \xi_2^2 + \eta \xi_1^2) -\sqrt{(\nu \xi_2^2 + \eta \xi_1^2)^2 -4(\nu \eta \xi_1^2 \xi_2^2 + |\xi_1|^2|\xi|^{-2})}\right) \notag\\
    &=& -2 \frac{\nu \eta \xi_1^2 \xi_2^2 + |\xi_1|^2|\xi|^{-2}}{\nu \xi_2^2 + \eta \xi_1^2
    	+\sqrt{(\nu \xi_2^2 + \eta \xi_1^2)^2 -4(\nu \eta \xi_1^2 \xi_2^2 
    		+ |\xi_1|^2|\xi|^{-2})}} \notag\\
    &\le& -\frac{\nu \eta \xi_1^2 \xi_2^2 + |\xi_1|^2|\xi|^{-2}}{\nu \xi_2^2 + \eta \xi_1^2}.
    \label{2b}
    \een
    It then follows from (\ref{0b}), (\ref{11b}) and (\ref{2b}) that
    \beno
    |G_1| &\le& \frac1{\sqrt{(\nu \xi_2^2 + \eta \xi_1^2)^2 -4(\nu \eta \xi_1^2 \xi_2^2 
    	+ |\xi_1|^2|\xi|^{-2})}} \\
    && \times\left(e^{-\frac34(\nu \xi_2^2 + \eta \xi_1^2) t} + e^{-\frac{\nu \eta \xi_1^2 \xi_2^2 + |\xi_1|^2|\xi|^{-2}}{\nu \xi_2^2 + \eta \xi_1^2} t}\right)\\
    &\le& \frac2{\nu \xi_2^2 + \eta \xi_1^2} \left(e^{-\frac34(\nu \xi_2^2 + \eta \xi_1^2) t} + e^{-\frac{\nu \eta \xi_1^2 \xi_2^2 + |\xi_1|^2|\xi|^{-2}}{\nu \xi_2^2 + \eta \xi_1^2} t}\right)\\
    &\le& \frac{C}{|\xi|^2} \,e^{-c_0 \,|\xi|^2 t} + \frac{C}{|\xi|^2} \, e^{-c_0 \frac{\xi_1^2 \xi_2^2}{|\xi|^2} t}\, e^{-c_0\,\frac{\xi_1^2}{|\xi|^4} t}\\
    &:=& M(\xi,t),
    \eeno
    where $c_0>0$ is a constant. Therefore, 
    $$
    |K_2| \le \frac{C|\xi_1||\xi_2|}{|\xi|^4} \,e^{-c_0 \,|\xi|^2 t} + \frac{C|\xi_1||\xi_2|}{|\xi|^4} \, e^{-c_0 \frac{\xi_1^2 \xi_2^2}{|\xi|^2} t}\, e^{-c_0\,\frac{\xi_1^2}{|\xi|^4} t}, 
    $$
    $$
    |K_3| \le \frac{C|\xi_1|^2}{|\xi|^4} \,e^{-c_0 \,|\xi|^2 t} + \frac{C|\xi_1|^2}{|\xi|^4} \, e^{-c_0 \frac{\xi_1^2 \xi_2^2}{|\xi|^2} t}\, e^{-c_0\,\frac{\xi_1^2}{|\xi|^4} t}
    $$
    and 
    $$
    |K_4| \le \frac{C}{|\xi|^2} \,e^{-c_0 \,|\xi|^2 t} + \frac{C}{|\xi|^2} \, e^{-c_0 \frac{\xi_1^2 \xi_2^2}{|\xi|^2} t}\, e^{-c_0\,\frac{\xi_1^2}{|\xi|^4} t}. 
    $$
    $K_1$ is bounded by 
    \beno
    |K_1| &\le&  |G_2| + \nu |\xi_2^2|\, |G_1| \le e^{\lambda_1 t} \le e^{\lambda_1 t} + |\lambda_1| |G_1| + \nu |\xi_2^2|\, |G_1|\\ 
    &\le& C\, e^{-\frac34(\nu \xi_2^2 + \eta \xi_1^2) t} + C\, e^{-\frac{\nu \eta \xi_1^2 \xi_2^2 + |\xi_1|^2|\xi|^{-2}}{\nu \xi_2^2 + \eta \xi_1^2} t}.
    \eeno
    $K_5$ shares the same bound.
    This completes the proof of Proposition \ref{432}. 
\end{proof}

\vskip .1in 
In order to prove Theorem \ref{main1}, we recall a lemma that provides an explicit  decay rate for the heat kernel associated with a fractional Laplacian $\Lambda^\alpha\ (\alpha\in \mathbb{R}) $. Here the fractional Laplacian operator can be defined through the Fourier transform
\beq\label{fr}
\widehat{\Lambda^\alpha f} (\xi) =|\xi|^{\alpha}\widehat{f}(\xi).
\eeq
The proof of the Lemma can be found in  many references (see, e.g., \cite{Wu01}).

\begin{lem} \label{fb}
	Let $\alpha \ge 0$, $\beta>0$ and {$1\le q \le p \le \infty$}. Then there exists a constant $C$ such that, for any $t>0$,
	$$
	\|\Lambda^\alpha e^{-\Lambda^\beta t}f\|_{L^p(\mathbb R^d)}\le \,C\, t^{-\frac{\alpha}{\beta} - \frac{d}{\beta}(\frac1q-\frac1p)}\, \|f\|_{L^q(\mathbb R^d)}.
	$$
\end{lem}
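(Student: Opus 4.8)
The plan is to write $\Lambda^\alpha e^{-\Lambda^\beta t}$ as a convolution operator and reduce the whole estimate to a single $L^r$ bound on its kernel via Young's convolution inequality. By the definition (\ref{fr}), $\Lambda^\alpha e^{-\Lambda^\beta t}$ is the Fourier multiplier with symbol $|\xi|^\alpha e^{-t|\xi|^\beta}$, so
$$
\Lambda^\alpha e^{-\Lambda^\beta t} f = K_t * f, \qquad \widehat{K_t}(\xi) = |\xi|^\alpha\, e^{-t|\xi|^\beta}.
$$
Choosing $r\in[1,\infty]$ by the Young relation $\frac1r = 1-\big(\frac1q-\frac1p\big)$, equivalently $\frac1p = \frac1r+\frac1q-1$, gives $\|K_t*f\|_{L^p}\le \|K_t\|_{L^r}\|f\|_{L^q}$. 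The hypothesis $1\le q\le p\le\infty$ guarantees $\frac1q-\frac1p\in[0,1]$, so such an admissible $r\in[1,\infty]$ always exists. Everything then rests on a sharp bound for $\|K_t\|_{L^r}$.

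First I would exploit the exact self-similarity of the symbol. The substitution $\xi = t^{-1/\beta}\zeta$ in the inverse Fourier integral produces the scaling identity
$$
K_t(x) = t^{-\frac{d+\alpha}{\beta}}\, K_1\!\big(t^{-1/\beta}x\big),
$$
and then $y=t^{-1/\beta}x$ yields
$$
\|K_t\|_{L^r} = t^{-\frac{d+\alpha}{\beta}+\frac{d}{r\beta}}\,\|K_1\|_{L^r} = t^{-\frac{\alpha}{\beta}-\frac{d}{\beta}\left(1-\frac1r\right)}\,\|K_1\|_{L^r}.
$$
Since $1-\frac1r=\frac1q-\frac1p$, the exponent of $t$ is exactly $-\frac{\alpha}{\beta}-\frac{d}{\beta}\big(\frac1q-\frac1p\big)$, matching the claim. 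Combined with Young's inequality, this reduces the lemma to the single $t$-independent statement $\|K_1\|_{L^r}<\infty$ for every $r\in[1,\infty]$.

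The main obstacle is therefore to establish $K_1\in L^r(\R^d)$ for all $r\in[1,\infty]$; by interpolation it suffices to prove $K_1\in L^1\cap L^\infty$. Boundedness is immediate: since $\alpha\ge 0$ and $\beta>0$, the symbol $|\xi|^\alpha e^{-|\xi|^\beta}$ belongs to $L^1(\R^d)$ (locally integrable at the origin because $\alpha>-d$, and with super-polynomial decay at infinity), whence $K_1\in L^\infty$. The delicate part is $K_1\in L^1$, which is controlled by the decay of $K_1$ as $|x|\to\infty$. Away from the frequency origin the symbol is $C^\infty$ with all derivatives decaying faster than any polynomial, so this region contributes a rapidly decaying Schwartz-type piece to $K_1$; the spatial decay rate is thus dictated solely by the leading homogeneous singularity of the symbol at $\xi=0$, namely $|\xi|^\alpha$ when $\alpha>0$ and $|\xi|^\beta$ when $\alpha=0$. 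By the standard correspondence between the homogeneity of a distribution and that of its Fourier transform, this forces $|K_1(x)|\lesssim |x|^{-d-\gamma}$ as $|x|\to\infty$ with $\gamma=\alpha>0$ (resp. $\gamma=\beta>0$), a decay strictly faster than $|x|^{-d}$ and hence integrable. I would make this rigorous by cutting the symbol with a smooth bump $\chi$ supported near $\xi=0$: the inverse transform of $(1-\chi)\,|\xi|^\alpha e^{-|\xi|^\beta}$ is Schwartz, while the inverse transform of the compactly supported singular part $\chi\,|\xi|^\alpha e^{-|\xi|^\beta}$ is analyzed through the asymptotic expansion of Fourier transforms of homogeneous distributions. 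Together these give $K_1\in L^1$, and combining $\|K_1\|_{L^1\cap L^\infty}<\infty$ with the scaling identity and Young's inequality completes the proof.
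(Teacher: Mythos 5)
Your argument is correct and is essentially the standard proof of this lemma: the paper itself does not prove Lemma \ref{fb} but simply cites \cite{Wu01}, where the same route is taken --- realize $\Lambda^\alpha e^{-\Lambda^\beta t}$ as convolution with a kernel $K_t$, use the self-similar scaling $K_t(x)=t^{-(d+\alpha)/\beta}K_1(t^{-1/\beta}x)$ to extract the stated power of $t$, and close with Young's inequality once $K_1\in L^r$ for all $r\in[1,\infty]$ is known. The one step that genuinely needs care, namely $K_1\in L^1$ when $\alpha$ or $\beta$ is not an even integer, is exactly the one you isolate, and your treatment of it (smooth cutoff near $\xi=0$ plus the asymptotics of Fourier transforms of homogeneous symbols, giving decay $|x|^{-d-\gamma}$ with $\gamma>0$) is the standard and correct way to finish.
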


In addition to the fractional operator defined in (\ref{fr}), we also use the fractional operators $\Lambda_i^\sigma$ with $i=1,2$ defined by 
$$
\widehat{\Lambda_i^\sigma f}(\xi) = |\xi_i|^\sigma \widehat f(\xi), \quad \xi=(\xi_1, \xi_2).
$$
We are now ready to prove Theorem \ref{main1}. 

\begin{proof}[Proof of Theorem \ref{main1}] Taking the $\mathring H^s$-norm of $u_1$ in (\ref{tt}), applying Plancherel's theorem and dividing the spatial domain $\mathbb R^2$ as in Proposition \ref{re}, we obtain 
\beno 
\|u_1(t)\|_{\mathring H^s(\mathbb R^2)} &\le& \|\Lambda^s K_1(t) u_0\|_{L^2(\mathbb R^2)} + \|\Lambda^s K_2(t) \theta_0\|_{L^2(\mathbb R^2)} \\
&\le& C\, \||\xi|^s\,K_1(\xi,t) \widehat{u_0}(\xi)\|_{L^2(S_1)} + C\,\||\xi|^s\,K_1(\xi,t) \widehat{u_0}(\xi)\|_{L^2(S_{2})}\\
&& + C\, \||\xi|^s\,K_2(\xi,t) \widehat{\theta_0}(\xi)\|_{L^2(S_1)} + C\,\||\xi|^s\,K_2(\xi,t) \widehat{\theta_0}(\xi)\|_{L^2(S_{2})}.
\eeno
To bound the terms on the right-hand side, we invoke the upper 
bounds for $K_1$ and $K_2$ obtained in Proposition \ref{432}. By (\ref{gb1}) in Proposition \ref{432}, Plancherel's theorem and Lemma \ref{fb},
\ben
\||\xi|^s\,K_1(\xi,t) \widehat{u_0}(\xi)\|_{L^2(S_1)} &\le& C\, \||\xi|^s\, e^{-c_0 |\xi|^2 t}\,\widehat{u_0}(\xi)\|_{L^2(S_1)} \notag \\
&=& C\, \||\xi|^s\,|\xi_1|^{\sigma} e^{-c_0 |\xi|^2 t}\,\,|\xi_1|^{-\sigma} \widehat{u_0}(\xi)\|_{L^2(S_1)}  \notag\\
&\le&  C\, \||\xi|^{s+\sigma} e^{-c_0 |\xi|^2 t}\,\,|\xi_1|^{-\sigma} \widehat{u_0}(\xi)\|_{L^2({S_1})} \notag \\
&=&  C\, \| \Lambda^{s+\sigma} e^{c_0 \Delta  t}\,\,\Lambda_1^{-\sigma} u_0\|_{L^2(\mathbb R^2)}
\notag \\
&\le& C\, t^{-\frac12 (s+\sigma)}\, \|\Lambda_1^{-\sigma}u_0\|_{L^2(\mathbb R^2)}. \label{ty}
\een
By (\ref{gb3}) in Proposition \ref{432}, 
\beno
\||\xi|^s\,K_1(\xi,t) \widehat{u_0}(\xi)\|_{L^2(S_{2})} &\le& C\, \||\xi|^s\,e^{-c_0 |\xi|^2 t}\,\widehat{u_0}(\xi)\|_{L^2(S_2)}\\
&& \quad  
+ C\,\||\xi|^s\, e^{-\frac{\nu \eta \xi_1^2 \xi_2^2+ |\xi_1|^2|\xi|^{-2}}{\nu \xi_2^2 + \eta \xi_1^2} t}\widehat{u_0}(\xi) \|_{L^2(S_{2})}.
\eeno 
The first part can be bounded the same way as (\ref{ty}). To give a precise upper bound on
the second part, we divide the consideration into two cases: $\xi\in S_{21}$ and $\xi\in S_{22}$, 
where 
$$
S_{21} = \left\{\xi \in S_2,\, |\xi_1| \ge |\xi_2 | \right\}, \quad S_{22} =\left\{\xi \in S_2,\, |\xi_1| < |\xi_2|\right\}
$$
with $S_2$ being defined as in Proposition \ref{432}. For $\xi\in S_{21}$, 
\beq\label{hh11}
-\frac{\nu \eta \xi_1^2 \xi_2^2+ |\xi_1|^2|\xi|^{-2}}{\nu \xi_2^2 + \eta \xi_1^2}  \le -C\, |\xi_2|^2 - C\, |\xi_1|^2|\xi|^{-4} \le -C\, |\xi_2|^2
\eeq
and for $\xi\in S_{22}$, 
\beq\label{hh2}
-\frac{\nu \eta \xi_1^2 \xi_2^2+ |\xi_1|^2|\xi|^{-2}}{\nu \xi_2^2 + \eta \xi_1^2}  \le -C\, |\xi_1|^2 - C\, |\xi_1|^2|\xi|^{-4} \le -C\, |\xi_1|^2.
\eeq
Therefore, 
\beno 
&& \||\xi|^s\, e^{-\frac{\nu \eta \xi_1^2 \xi_2^2+ |\xi_1|^2|\xi|^{-2}}{\nu \xi_2^2 + \eta \xi_1^2} t}\widehat{u_0}(\xi) \|_{L^2(S_{2})}\\ &\le& C\, \||\xi|^s\, e^{-C\, |\xi_2|^2 t}\, \widehat{u_0}(\xi) \|_{L^2(S_{21})}  + C\,  \||\xi|^s\, e^{-C\, |\xi_1|^2 t}\, \widehat{u_0}(\xi) \|_{L^2(S_{22})}\\
 &\le& C\, \| |\xi|^s\,|\xi_2|^{\sigma} e^{-C\, |\xi_2|^2 t}\,|\xi_2|^{-\sigma}\, \widehat{u_0}(\xi) \|_{L^2(S_{21})}\\
 &&  + \,C\,  \| |\xi|^s\,|\xi_1|^{\sigma} e^{-C\, |\xi_1|^2 t}\, |\xi_1|^{-\sigma}\,\widehat{u_0}(\xi) \|_{L^2(S_{22})}\\
  &\le& C\, t^{-\frac{\sigma}{2}}\, \|u_0\|_{\mathring H^{s, -\sigma}}.
\eeno 
We now estimate $\||\xi|^s\,K_2(\xi,t) \widehat{\theta_0}(\xi)\|_{L^2(S_1)}$. 
Invoking (\ref{gb2}) in Proposition \ref{432} and proceeding as in (\ref{ty}), we have
\ben
\||\xi|^s\,K_2(\xi,t) \widehat{\theta_0}(\xi)\|_{L^2(S_1)} &\le& C\, t\, \||\xi|^s\,e^{-c_0 |\xi|^2 t}\,\widehat{\theta_0}(\xi)\|_{L^2(S_1)} \notag \\
&\le& C\, t^{-\frac12 (s+\sigma) +1}\, \|\Lambda_1^{-\sigma} \theta_0\|_{L^2(\mathbb R^2)}.
\label{hg}
\een
We now turn to $\||\xi|^s\,K_2(\xi,t) \widehat{\theta_0}(\xi)\|_{L^2(S_{2})}$. By 
(\ref{gb4}), 
\ben
&& \||\xi|^s\,K_2(\xi,t) \widehat{\theta_0}(\xi)\|_{L^2(S_{2})} \le C\, \| |\xi|^s\,\frac{\xi_1\xi_2}{|\xi|^4} e^{-c_0 |\xi|^2 t}\widehat{\theta_0}(\xi)\|_{L^2(S_{2})}
\notag\\
&& \qquad\qquad + C\, \Big\| |\xi|^s\, \frac{|\xi_1||\xi_2|}{|\xi|^4} \, e^{-c_0 \frac{\xi_1^2 \xi_2^2}{|\xi|^2} t}\, e^{-c_0\,\frac{\xi_1^2}{|\xi|^4} t}\,\widehat{\theta_0}(\xi)\Big\|_{L^2(S_{2})}.
\label{po}
\een
The first part in (\ref{po}) can be bounded as in (\ref{ty}) and (\ref{hg}), 
\beno
\| |\xi|^s\,\frac{\xi_1\xi_2}{|\xi|^4} e^{-c_0 |\xi|^2 t}\widehat{\theta_0}(\xi)\|_{L^2(S_{2})}
&\le& \| |\xi|^{s-2} \, e^{-c_0 |\xi|^2 t}\widehat{\theta_0}(\xi)\|_{L^2(\mathbb R^2)}\\
&\le& C\, t^{-\frac12 (s+\sigma) +1}\, \|\Lambda_1^{-\sigma} \theta_0\|_{L^2(\mathbb R^2)}.
\eeno
To estimate the second piece in (\ref{po}), we invoke the simple fact that $x^m \,e^{-x} \le C(m)$ valid for any $m\ge 0$ and $x\ge 0$, and proceed as in (\ref{hh11}) and (\ref{hh2}) to obtain 
\beno
|\xi|^s\, \frac{|\xi_1||\xi_2|}{|\xi|^4} \, e^{-c_0 \frac{\xi_1^2 \xi_2^2}{|\xi|^2} t}\, e^{-c_0\,\frac{\xi_1^2}{|\xi|^4} t} &=& |\xi|^{s-1} \frac{|\xi_2|}{|\xi|} \, e^{-c_0 \frac{\xi_1^2 \xi_2^2}{|\xi|^2} t}\,t^{-\frac12} \, \frac{|\xi_1| t^{\frac12}}{|\xi|^2} \,e^{-c_0\,\frac{\xi_1^2}{|\xi|^4} t}\\
&\le& C\, t^{-\frac12}\, |\xi|^{s-1} e^{-c_0 \frac{\xi_1^2 \xi_2^2}{|\xi|^2} t}\\
&\le& \left\{\begin{array}{ll} C\, t^{-\frac12}\, |\xi|^{s-1}\, e^{-C \xi_2^2 t}\quad \mbox{for $\xi\in S_{21}$,}\\ C\, t^{-\frac12}\, |\xi|^{s-1}\, e^{-C \xi_1^2 t}\quad \mbox{for $\xi\in S_{22}$.} \end{array}\right.
\eeno
Therefore, the second term in (\ref{po}) can be bounded by 
\beno 
&& \Big\| |\xi|^s\, \frac{|\xi_1||\xi_2|}{|\xi|^4} \, e^{-c_0 \frac{\xi_1^2 \xi_2^2}{|\xi|^2} t}\, e^{-c_0\,\frac{\xi_1^2}{|\xi|^4} t}\,\widehat{\theta_0}(\xi)\Big\|_{L^2(S_{2})} \\
&& \le \Big\| |\xi|^s\, \frac{|\xi_1||\xi_2|}{|\xi|^4} \, e^{-c_0 \frac{\xi_1^2 \xi_2^2}{|\xi|^2} t}\, e^{-c_0\,\frac{\xi_1^2}{|\xi|^4} t}\,\widehat{\theta_0}(\xi)\Big\|_{L^2(S_{21})} \\
&&\quad  + \Big\| |\xi|^s\, \frac{|\xi_1||\xi_2|}{|\xi|^4} \, e^{-c_0 \frac{\xi_1^2 \xi_2^2}{|\xi|^2} t}\, e^{-c_0\,\frac{\xi_1^2}{|\xi|^4} t}\,\widehat{\theta_0}(\xi)\Big\|_{L^2(S_{22})} \\
&& \le C\, t^{-\frac12}\,\||\xi|^{s-1}\, e^{-C \xi_1^2 t}\,\widehat{\theta_0}(\xi)\|_{L^2} + C\, t^{-\frac12}\,\||\xi|^{s-1}\, e^{-C \xi_2^2 t}\,\widehat{\theta_0}(\xi)\|_{L^2} \\
&& \le C\, t^{-\frac12-\frac{\sigma}{2}}\, \|\theta_0\|_{\mathring H^{s-1, -\sigma}}.
\eeno 
We have completed the estimates of $\|u_1(t)\|_{\mathring H^s(\mathbb R^2)}$. Collecting the estimates yields 
\beno
\|u_1(t)\|_{\mathring H^s(\mathbb R^2)} &\le& C\, t^{-\frac12 (s+\sigma)}\, \|\Lambda_1^{-\sigma} u_{10}\|_{L^2(\mathbb R^2)} + C\, t^{-\frac{\sigma}{2}}\, \|u_{10}\|_{\mathring H^{s, -\sigma}(\mathbb R^2)}\\
&& + \,C\, t^{-\frac12 (s+\sigma) +1}\, \|\Lambda_1^{-\sigma} \theta_0\|_{L^2(\mathbb R^2)}
+ C\,t^{-\frac12-\frac{\sigma}{2}}\, \|\theta_0\|_{\mathring H^{s-1, -\sigma}(\mathbb R^2)}.
\eeno
$\|u_2(t)\|_{\mathring H^s(\mathbb R^2)}$ can be estimated very similarly. Only the last piece 
is bounded slightly differently.  Its upper bound is 
\beno
\|u_2(t)\|_{\mathring H^s(\mathbb R^2)} &\le& C\, t^{-\frac12 (s+\sigma)}\, \|\Lambda_1^{-\sigma} u_{20}\|_{L^2(\mathbb R^2)} + C\, t^{-\frac{\sigma}{2}}\, \|u_{20}\|_{\mathring H^{s, -\sigma}(\mathbb R^2)}\\
&& + \,C\, t^{-\frac12 (s+\sigma) +1}\, \|\Lambda_1^{-\sigma} \theta_0\|_{L^2(\mathbb R^2)}
+ C\,t^{-1-\frac{\sigma}{2}}\, \|\theta_0\|_{\mathring H^{s, -\sigma}(\mathbb R^2)}.
\eeno
The estimate of $\|\theta(t)\|_{\mathring H^s(\mathbb R^2)}$ is also similar, 
\beno
\|\theta(t)\|_{\mathring H^s(\mathbb R^2)} &\le& \,C\, t^{-\frac12 (s+\sigma) +1}\, \|\Lambda_1^{-\sigma} u_{20}\|_{L^2(\mathbb R^2)}
+ C\,t^{-\frac{\sigma}{2}}\, \|u_{20}\|_{\mathring H^{s-2, -\sigma}(\mathbb R^2)}\\
&& + \,C\, t^{-\frac12 (s+\sigma)}\, \|\Lambda_1^{-\sigma} \theta_0\|_{L^2(\mathbb R^2)} + C\, t^{-\frac{\sigma}{2}}\, \|\theta_0\|_{\mathring H^{s, -\sigma}(\mathbb R^2)}.
\eeno
This completes the proof of Theorem \ref{main1}.
\end{proof}

\vskip .3in 
\section{Proof of Theorem \ref{main2}}
\label{sec3}

This section proves Theorem \ref{main2}. The proof makes use of the wave structure in (\ref{nnn}) to construct 
a Lyapunov functional for the Fourier piece of the solution away from the axes in the frequency space. The construction involves a suitable combination of two energy inequalities.

\begin{proof}[Proof of Theorem \ref{main2}] Let $\widehat\varphi$ be the Fourier cutoff 
function defined in (\ref{phi}). Taking the convolution of $\varphi$ with the velocity 
equation in (\ref{nnn}) leads to 
\beq\label{sa}
\p_{tt} (\varphi*u) -( \eta \p_{11} + \nu \p_{22})  \p_t (\varphi*u) + \nu \eta \p_{11} \p_{22} (\varphi*u)  + \p_{11}\Delta^{-1} (\varphi*u) = 0.
\eeq
Dotting (\ref{sa}) with $\p_t (\varphi\ast u)$, we find 
\ben
&&\frac{1}{2} \frac{d}{dt} \left(\|\partial_{t} (\varphi*u)\|_{L^2}^2 +  \|\mathcal{R}_1 (\varphi*u)\|_{L^2}^{2}+ \eta \nu \|\partial_{12}(\varphi*u)\|_{L^{2}}^{2}\right) \notag\\
&& \qquad \qquad + \nu  \|\partial_{2}\partial_t (\varphi*u)\|_{L^{2}}^{2}+ \eta \|\partial_{1}\partial_t (\varphi*u)\|_{L^{2}}^{2}=0, \label{ex1}
\een
where we have written $\mathcal R_1 = \p_1 (-\Delta)^{-\frac12}$, the standard notation 
for the Riesz transform. Dotting (\ref{sa}) with $\varphi\ast u$ yields
\beno
&&\frac{1}{2} \frac{d}{dt}( \nu \|\partial_{2}(\varphi*u)\|_{L^{2}}^{2}+ \eta \|\partial_{1}(\varphi*u)\|_{L^{2}}^{2})+ \|\mathcal{R}_1 (\varphi*u)\|_{L^2}^{2}\\
&&\qquad  + \nu \eta \|\partial_{12}(\varphi*u)\|_{L^{2}}^{2}+ \int \partial_{tt} (\varphi*u)\cdot (\varphi*u) dx=0.
\eeno
Writing 
$$
\int \partial_{tt} (\varphi*u)\cdot (\varphi*u) dx = \frac{d}{dt}\int \partial_{t} (\varphi*u)\cdot (\varphi*u)\,dx  -\|\partial_{t}(\varphi*u)\|_{L^{2}}^{2},
$$
we obtain 
\ben
&&\frac{1}{2} \frac{d}{dt}\left(\nu \|\partial_{2}(\varphi*u)\|_{L^{2}}^{2}+ \eta \|\partial_{1}(\varphi*u)\|_{L^{2}}^{2} + 2 (\partial_{t} (\varphi*u), (\varphi*u))\right)\notag\\
&&\qquad+ \|\mathcal{R}_1 (\varphi*u)\|_{L^2}^{2}   + \nu \eta \|\partial_{12}(\varphi*u)\|_{L^{2}}^{2}- \|\partial_{t}(\varphi*u)\|_{L^{2}}^{2} =0. \label{ex2}
\een
where $(f, g)$ denotes the $L^2$-inner product. Let $\lambda>0$. Then (\ref{ex1}) $+ \lambda$ (\ref{ex2}) yields 
\ben\label{ss1}
 \frac{d}{dt} A(t) + 2 B(t) =0,
\een
where 
\beno
A(t) &:=& \|\partial_{t}(\varphi * u)\|_{L^{2}}^{2}+ \|\mathcal{R}_{1} (\varphi * u)\|_{L^{2}}^{2}+     \eta \nu \|\partial_{12}(\varphi * u) \|_{L^{2}}^{2}\\
 && +  \lambda \nu \|\partial_{2}(\varphi * u)\|_{L^{2}}^{2}+  \lambda \eta \|\partial_{1}(\varphi * u)\|_{L^{2}}^{2}
 + 2\lambda (\partial_t (\varphi * u), (\varphi * u)),\\
B(t) &:=& \nu \|\partial_2\partial_{t} (\varphi * u)\|_{L^{2}}^{2}
+\eta \|\partial_1\partial_{t} (\varphi * u)\|_{L^{2}}^{2}
+ \lambda \eta \nu \|\partial_{12}(\varphi * u)\|_{L^{2}}^{2}\\
&&  - \lambda \|\partial_{t} (\varphi * u)\|_{L^{2}}^{2}+ \lambda  \|\mathcal{R}_{1}(\varphi * u)\|_{L^{2}}^{2}.
\eeno
Our immediate goal here is to show that, if we choose $\lambda=\lambda(\nu, \eta, a_1, a_2)$ suitably, then there is a constant $C_0 = C_0(\nu, \eta, a_1, a_2)>0$ such that, for  any $t\ge 0$, 
\beq\label{comp}
B(t) \ge C_0\, A(t). 
\eeq
Recall that $a_1>0$ and $a_2>0$ are the parameters involved in the definition of the frequency cutoff function defined by (\ref{phi}). We now prove (\ref{comp}). By Plancherel's theorem, 
\beq\label{jk1}
\|\partial_2\partial_{t} (\varphi * u)\|_{L^{2}}^{2} = \int_{|\xi_1|\ge a_1, |\xi_2|\ge a_2} |\xi_2\,\p_t (\widehat\varphi  \widehat u(\xi,t))|^2\,d\xi \ge a_2^2\, \|\partial_{t} (\varphi * u)\|_{L^{2}}^{2}.
\eeq
Similarly, 
\ben
&& \|\partial_1\partial_{t} (\varphi * u)\|_{L^{2}}^{2} \ge a_1^2 \, \, \|\partial_{t} (\varphi * u)\|_{L^{2}}^{2}, \quad  \|\partial_{12}(\varphi * u)\|_{L^{2}}^{2} \ge a_1^2 \, \|\p_2 (\varphi * u)\|_{L^{2}}^{2},\label{jk2}\\
&& 
\|\partial_{12}(\varphi * u)\|_{L^{2}}^{2} \ge a_2^2 \, \|\p_1 (\varphi * u)\|_{L^{2}}^{2}, \quad \|\partial_{12}(\varphi * u)\|_{L^{2}}^{2} \ge a_1^2\,a_2^2 \, \|\varphi * u\|_{L^{2}}^{2}. 
\label{jk3}
\een
If $\lambda>0$ satisfies 
$$
\lambda \le \frac12 (\nu \,a_2^2 + \eta\, a_1^2), 
$$
then, by (\ref{jk1}), (\ref{jk2}) and (\ref{jk3}), 
\beno
B(t) &\ge& (\nu\, a_2^2 + \eta\, a_1^2)\|\partial_{t} (\varphi * u)\|_{L^{2}}^{2}- \lambda \|\partial_{t} (\varphi * u)\|_{L^{2}}^{2}+ \frac14 \lambda \eta \nu \|\partial_{12}(\varphi * u)\|_{L^{2}}^{2}\\
&&  + \frac14 \lambda \eta \nu\, a_1^2 \, \|\p_2 (\varphi * u)\|_{L^{2}}^{2} + \frac14 \lambda \eta \nu\, a_2^2 \, \|\p_1 (\varphi * u)\|_{L^{2}}^{2} \\
&& +\frac14\lambda \eta \nu \,a_1^2\, a_2^2\, \|\varphi*u\|_{L^2}^2  + \lambda  \|\mathcal{R}_{1}(\varphi * u)\|_{L^{2}}^{2}\\
&\ge& \frac12\,(\nu\, a_2^2 + \eta\, a_1^2)\|\partial_{t} (\varphi * u)\|_{L^{2}}^{2}+ \frac14 \lambda \eta \nu \|\partial_{12}(\varphi * u)\|_{L^{2}}^{2}\\
&& + \frac14 \lambda \eta \nu\, a_1^2 \, \|\p_2 (\varphi * u)\|_{L^{2}}^{2} + \frac14 \lambda \eta \nu\, a_2^2 \, \|\p_1 (\varphi * u)\|_{L^{2}}^{2}\\
&& +  \frac14\lambda \eta \nu \,a_1^2\, a_2^2\, \|\varphi*u\|_{L^2}^2+ \lambda  \|\mathcal{R}_{1}(\varphi * u)\|_{L^{2}}^{2}.
\eeno
By the Cauchy-Schwarz inequality, 
\beno 
&& \frac14\,(\nu\, a_2^2 + \eta\, a_1^2)\|\partial_{t} (\varphi * u)\|_{L^{2}}^{2} +  \frac14\lambda \eta \nu \,a_1^2\, a_2^2\, \|\varphi*u\|_{L^2}^2\\
&& \ge \frac12 \sqrt{\nu\, a_2^2 + \eta\, a_1^2} \,\sqrt{\lambda \eta \nu \,a_1^2\, a_2^2}\,
(\partial_{t} (\varphi * u), \varphi*u).
\eeno 
Therefore, 
\beno 
B(t) &\ge& \frac14\,(\nu\, a_2^2 + \eta\, a_1^2)\|\partial_{t} (\varphi * u)\|_{L^{2}}^{2} 
+ \lambda  \|\mathcal{R}_{1}(\varphi * u)\|_{L^{2}}^{2}\\
&& + \frac14 \lambda \eta \nu \|\partial_{12}(\varphi * u)\|_{L^{2}}^{2}  + \frac14 \lambda \eta \nu\, a_1^2 \, \|\p_2 (\varphi * u)\|_{L^{2}}^{2} + \frac14 \lambda \eta \nu\, a_2^2 \, \|\p_1 (\varphi * u)\|_{L^{2}}^{2}\\
&& + \frac12 \sqrt{\nu\, a_2^2 + \eta\, a_1^2} \,\sqrt{\lambda \eta \nu \,a_1^2\, a_2^2}\,
(\partial_{t} (\varphi * u), \varphi*u). 
\eeno 
If we choose $C_0$ as 
$$
C_0 = \frac14\min\left\{(\nu\, a_2^2 + \eta\, a_1^2),\, \lambda,\,   \eta a_1^2,\,  \nu a_2^2, \,\frac{1}{\sqrt{\lambda}}\,\sqrt{\nu\, a_2^2 + \eta\, a_1^2} \,\sqrt{\eta \nu \,a_1^2\, a_2^2}\, \right\},
$$
then $B(t) \ge C_0 A(t)$, which is (\ref{comp}). Inserting (\ref{comp}) in (\ref{ss1}) leads to 
\beq\label{imm}
A(t) \le A(0) e^{-C_0 t}.
\eeq
To prove (\ref{ss2}), we derive a lower bound for $A(t)$. By (\ref{jk3}) and the Cauchy-Schwarz inequality, 
\beno
A(t) &\ge& \|\partial_{t}(\varphi * u)\|_{L^{2}}^{2}+ \|\mathcal{R}_{1} (\varphi * u)\|_{L^{2}}^{2}+     \eta \nu\, a_1^2 \,a_2^2\, \|\varphi * u \|_{L^{2}}^{2} \notag\\
&& +  \lambda \nu \|\partial_{2}(\varphi * u)\|_{L^{2}}^{2}+  \lambda \eta \|\partial_{1}(\varphi * u)\|_{L^{2}}^{2}
-\frac12 \|\partial_{t}(\varphi * u)\|_{L^{2}}^{2} - 2 \lambda^2 \|\varphi * u\|_{L^{2}}^{2}
\notag\\
&=& \frac12 \|\partial_{t}(\varphi * u)\|_{L^{2}}^{2} +  \|\mathcal{R}_{1} (\varphi * u)\|_{L^{2}}^{2} + (\eta \nu\, a_1^2 \,a_2^2 - 2 \lambda^2) \|\varphi * u\|_{L^{2}}^{2}
\notag \\
&& + \lambda \nu \|\partial_{2}(\varphi * u)\|_{L^{2}}^{2}+  \lambda \eta \|\partial_{1}(\varphi * u)\|_{L^{2}}^{2}.
\eeno
If $\lambda$ is selected  to satisfy
$$
\eta \nu\, a_1^2 \,a_2^2 - 2 \lambda^2 \ge \frac12 \eta \nu\, a_1^2 \,a_2^2 
\quad \mbox{or}\quad \lambda \le \frac12\, \sqrt{\eta\,\nu} \,a_1 \,a_2, 
$$
then $A(t)$ is bounded below by 
\ben
A(t) &\ge&  \frac12 \|\partial_{t}(\varphi * u)\|_{L^{2}}^{2} +  \|\mathcal{R}_{1} (\varphi * u)\|_{L^{2}}^{2} + \frac12 \eta \nu\, a_1^2 \,a_2^2 \,\|\varphi * u\|_{L^{2}}^{2} \notag\\
&& + \lambda \nu \|\partial_{2}(\varphi * u)\|_{L^{2}}^{2}+  \lambda \eta \|\partial_{1}(\varphi * u)\|_{L^{2}}^{2} \notag \\
&\ge& C\, (\|\partial_{t}(\varphi * u)\|_{L^{2}}^{2} + \|\varphi * u\|_{L^{2}}^{2}
+ \|\na(\varphi * u)\|_{L^{2}}^{2}),  \label{lww}
\een
where $C=C(\nu, \eta, a_1, a_2)>0$ is a constant. 
We now derive an upper bound for $A(0)$. Recalling that $(u, \theta)$ satisfies 
\beno
&&\partial_t u_1=  \nu \partial_{22}u_1- \Delta ^{-1} \partial_{1}\partial_{2} \theta,  \\
&&\partial_t u_2= \nu \partial_{22}u_2+ \Delta ^{-1} \partial_{1}\partial_{1} \theta, \\
&& \p_t \theta = \eta\, \p_{11} \theta - u_2,  
\eeno
we obtain 
$$
\partial_t u_1(0) = \nu \partial_{22}u_{01}- \Delta ^{-1} \partial_{1}\partial_{2} \theta_0, \qquad \partial_t u_2(0) = \nu \partial_{22}u_{02}+ \Delta ^{-1} \partial_{1}\partial_{1} \theta_0
$$
and thus
\beq\label{gf1}
\|(\p_t (\phi * u)(0)\|_{L^2}^2 \le  2\nu^2 \|\p_{22} (\varphi *u_0)\|_{L^2}^2 + 2\|\varphi *\theta_0\|_{L^2}^2,
\eeq
where we have used the fact that Riesz transforms are bounded in $L^q$ with $1<q<\infty$ (see
\cite{Stein}), 
$$
\|\Delta ^{-1} \partial_{1}\partial_{2} \,f\|_{L^q} \le C\, \|f\|_{L^q}. 
$$
In addition, if we invoke the inequality 
$$
2\lambda (\partial_t (\varphi * u), (\varphi * u)) \le \|\p_t (\phi * u)\|_{L^2}^2 + \lambda^2
\|\varphi * u\|_{L^2}^2,
$$
we obtain the following upper bound for $A(0)$, 
\ben
A(0) &:=& \|\partial_{t}(\varphi * u)(0)\|_{L^{2}}^{2}+ \|\mathcal{R}_{1} (\varphi * u_0)\|_{L^{2}}^{2} + \eta \nu \|\partial_{12}(\varphi * u_0) \|_{L^{2}}^{2} \notag\\
&& +  \lambda \nu \|\partial_{2}(\varphi * u_0)\|_{L^{2}}^{2}+  \lambda \eta \|\partial_{1}(\varphi * u_0)\|_{L^{2}}^{2}
+ 2\lambda (\partial_t (\varphi * u)(0), (\varphi * u_0)), \notag\\
&\le& 4\nu^2 \|\p_{22}(\varphi * u_0)\|_{L^2}^2 + 4\|\varphi *\theta_0\|_{L^2}^2
+ (1+\lambda^2) \|\varphi *u_0\|_{L^2}^2 \notag\\
&& + \eta \nu \|\partial_{12}(\varphi * u_0) \|_{L^{2}}^{2}+  \lambda \nu \|\partial_{2}(\varphi * u_0)\|_{L^{2}}^{2}+  \lambda \eta \|\partial_{1}(\varphi * u_0)\|_{L^{2}}^{2} \notag\\
&\le& \,C\, (\|\varphi * u_0\|_{H^{2}}^{2} + \|\varphi *\theta_0\|_{L^2}^2). \label{upp}
\een 
Combining (\ref{imm}), (\ref{lww}) and (\ref{upp}), we find that 
\beno
&& \|\partial_{t}(\varphi * u)(t)\|_{L^{2}}^{2} + \|(\varphi * u)(t)\|_{L^{2}}^{2}
+ \|\na(\varphi * u)(t)\|_{L^{2}}^{2} \\
 && \qquad \le \,C\, (\|\varphi * u_0\|_{H^{2}}^{2} + \|\varphi *\theta_0\|_{L^2}^2)\, e^{-C_0 t},
\eeno
which is  (\ref{ss2}). The proof for the exponential decay upper bound for $\theta$ in (\ref{ss3})
is very similar. In fact, since $\theta$ satisfies the same wave equation as $u$, most of the lines for $u$ remain valid when we replace $u$ by $\theta$ and replace the bound 
in (\ref{gf1}) by 
$$
\|(\p_t (\phi * \theta)(0)\|_{L^2}^2 \le  2\eta^2 \|\p_{11} (\varphi *\theta_0)\|_{L^2}^2 + 2\|\varphi *u_{02}\|_{L^2}^2.
$$
This completes the proof of Theorem \ref{main2}. 
\end{proof}

\vskip .3in 
\section{Proof of Theorem \ref{main3}}
\label{sec4}

This section is devoted to the proof of Theorem \ref{main3}. As outlined in the introduction, 
the proof uses the bootstrapping argument and the major step is to establish the energy
inequality 
\beq\label{ene}
E(t) \le C_1\, E(0) + C_2\, E(t)^{\frac32},
\eeq
where $C_1$ and $C_2$ are constants and $E(t)$ is the energy functional defined in (\ref{ee}), or
\ben
E(t) &=& \max_{0\leq \tau \leq t} (\|u(\tau)\|_{H^2}^{2} +\|\theta(\tau)\|_{H^2}^{2})  + 2 \nu \int_{0}^{t} \|\partial_2 u\|_{H^2}^{2}d\tau \notag\\
&& + 2\eta \int_{0}^t \|\partial_1 \theta\|_{H^2}^{2} d\tau + \delta \int_{0}^{t}\|\partial_1 u_2\|_{L^2}^{2}\, d\tau, \label{ee1}
\een
with $\delta>0$ to be specified later. We then apply the bootstrapping argument to (\ref{ene}) to get the desired stability result. 

\vskip .1in 
\begin{proof}[Proof of Theorem \ref{main3}]
We define $E(t)$ as in (\ref{ee1}). Our main efforts are devoted to establishing (\ref{ene}). 
This process consists of two major parts. The first is to estimate the $H^2$-norm of $(u, \theta)$ while the second is to estimate $\|\p_1 u\|_{L^2}^2$ and its time integral. 

\vskip .1in 
For a divergence-free vector field $u$, namely $\na\cdot u=0$, we have 
$$
\|\na u\|_{L^2} = \|\om\|_{L^2}, \quad \|\Delta u\|_{L^2} = \|\na \om\|_{L^2},
$$
where $\om=\na\times u$ is the vorticity. Therefore, the $H^2$-norm of $u$ is equivalent to the 
sum of the $L^2$-norm of $u$, the $L^2$-norm of $\om$ and the  $L^2$-norm of $\na \om$. To estimate the $L^2$-norm of $(u, \theta)$, we take the inner product of $(u, \theta)$
with the first two equations in (\ref{bb1}) to obtain 
\ben
&&\|u(t)\|_{L^2}^{2} +\|\theta(t)\|_{L^2}^{2}  + 2 \nu \int_{0}^{t} \|\partial_2 u(\tau)\|_{L^2}^{2}\, d\tau
+ 2\eta \int_{0}^t \|\partial_1 \theta(\tau)\|_{L^2}^{2} d\tau \notag\\
&& = \|u_0\|_{L^2}^{2} +\|\theta_0\|_{L^2}^{2}.\label{l2estimate}
\een
To estimate the $L^2$-norm of $(\om, \na \theta)$, we resort to the vorticity equation 
combined with the equation of $\theta$, 
\begin{equation}\label{vorticityequation}
\begin{aligned}
&\partial_t \omega +u\cdot \nabla \omega=  \nu \partial_{22}\omega + \partial_1 \theta, \\
&\partial_t \theta + u \cdot \nabla \theta + u_2= \eta \partial_{11} \theta.
\end{aligned}
\end{equation}
Taking the inner product of $(\om, \na\theta)$ with the equations of $\om$ and $\na\theta$, we 
obtain 
\begin{equation}
\begin{aligned}
\frac{1}{2}\frac{d}{dt}(\|\omega\|_{L^2}^{2}+\|\nabla \theta\|_{L^2}^{2})+ \nu \|\partial_2 \omega \|_{L^2}^{2}+ \eta  \|\partial_1 \nabla \theta \|_{L^2}^{2}= I_1 + I_2, \label{coo}
\end{aligned}
\end{equation}
where 
$$
 I_1 =\int (\partial_1 \theta\, \omega- \nabla u_2\cdot \nabla \theta)\, dx, \quad I_2= -\int \nabla \theta \cdot \nabla u \cdot \nabla \theta\, dx.
$$
It is easy to check that 
$$
I_1 =0.
$$
In fact, writing $\om$ and $u$ in terms of the stream function $\psi$, namely $\om =\Delta \psi$ and $u= \na^\perp \psi:=(-\p_2 \psi, \p_1 \psi)$, we have 
\beno
I_1 &=& \int (\partial_1 \theta\, \omega- \nabla u_2\cdot \nabla \theta)\, dx 
= \int (\p_1 \theta \Delta \psi - \na \p_1 \psi\cdot \nabla \theta)\, dx\\
&=& \int (-\theta \, \Delta\p_1 \psi + \Delta \p_1 \psi \, \theta)\,dx =0. 
\eeno
To bound $I_2$, we write out the four terms in $I_2$ explicitly,
\beno
I_2 &=&  -\int (\partial_1 u_1 (\partial_1 \theta)^2+ \partial_1 u_2 \partial_1 \theta \partial_2 \theta +\partial_2 u_1 \partial_1 \theta \partial_2 \theta+ \partial_2 u_2 (\partial_2 \theta)^2)\,dx\\
&:=& I_{21} + I_{22} + I_{23} + I_{24}.
\eeno 
The terms on the right-hand side can be bounded as follows. The key point here is to obtain
upper bounds that are time integrable.  By Lemma \ref{tri}, 
\beno
|I_{21}| &\le& C\, \|\partial_1u_1\|_{L^2} \|\partial_1 \theta\|_{L^2}^{\frac{1}{2}} \|\partial_2 \partial_1 \theta \|_{L^2}^{\frac{1}{2}} \|\partial_1 \theta\|_{L^2}	   ^{\frac{1}{2}} \|\partial_1 \partial_1\theta \|_{L^2}^{\frac{1}{2}}\\
&\le& C\, \|\partial_1u_1\|_{L^2}\, \|\partial_1\theta \|_{L^2}\,\|\p_1 \na \theta\|_{L^2},
\eeno 
\beno
|I_{22}| &\le& C\,\|\partial_1 \theta \|_{L^2} \|\partial_1 u_2 \|_{L^2}^{\frac{1}{2}} \|\partial_2 \partial_1 u_2\|_{L^2}^{\frac{1}{2}} \|\partial_2 \theta\|_{L^2}^{\frac{1}{2}} \|\partial_1 \partial_2\theta\|_{L^2}^{\frac{1}{2}}\\
&\le& C\,\|\partial_1 u_2 \|_{L^2}^{\frac{1}{2}}\, \|\partial_2 \theta\|_{L^2}^{\frac{1}{2}}\,
\|\partial_1 \theta \|_{L^2} \,\|\p_2 \na u\|_{L^2}^{\frac12}\, \|\p_1\na \theta\|_{L^2}^{\frac12},
\eeno
\beno
|I_{23}| &\le& C\,\|\partial_2 \theta \|_{L^2} \|\partial_1 \theta\|_{L^2}^{\frac{1}{2}} \|\partial_2 \partial_1 \theta\|_{L^2}^{\frac{1}{2}} \|\partial_2 u_1\|_{L^2}^{\frac{1}{2}} \|\partial_1 \partial_2 u_1 \|_{L^2}^{\frac{1}{2}}.
\eeno
By the divergence-free condition $\na\cdot u=0$, 
\beno 
I_{24} &=& \int \partial_1 u_1 (\partial_2 \theta)^2\,  dx =  - 2\int u_1\, \partial_2 \theta\, \partial_1\partial_2 \theta\,dx\\
&\le&  C\,  \|\partial_1\partial_2 \theta\|_{L^2} \|\partial_2 \theta\|_{L^2}^{\frac{1}{2}} \|\partial_1 \partial_2 \theta \|_{L^2}^{\frac{1}{2}}  \| u_1\|_{L^2}^{\frac{1}{2}} \|\partial_2 u_1\|_{L^2}^{\frac{1}{2}} \\
&=& C\,  \| u_1\|_{L^2}^{\frac{1}{2}}\, \|\partial_2 \theta\|_{L^2}^{\frac{1}{2}}\, \|\partial_2 u_1\|_{L^2}^{\frac{1}{2}}\, \|\p_1\na\theta\|_{L^2}^{\frac{3}{2}}.
\eeno 
Clearly, the sum of the powers of the terms that contain the favorable derivatives ($\p_1$ on $\theta$ and $\p_2$ on $u$) is $2$ in each upper bound above. Therefore each upper bound is time integrable. Collecting the upper bounds on $I_2$ and inserting them in (\ref{coo}), we obtain 
\ben
&& \frac{d}{dt}(\|\nabla u\|_{L^2}^{2}+\|\nabla \theta\|_{L^2}^{2})+ 2\nu \|\partial_2 \nabla u \|_{L^2}^{2}+ 2\eta  \|\partial_1 \nabla \theta \|_{L^2}^{2} \notag\\
&& \qquad\qquad \leq C\, (\|u\|_{H^1} + \|\na \theta\|_{L^2})\, \left(\|\p_2 u\|_{H^1}^2 + \|\p_1\theta\|_{H^1}^2\right). \label{h1sum}
\een
Integrating  (\ref{h1sum}) over $[0, t]$ and combining with $(\ref{l2estimate})$, we obtain 
\ben
&& \|(u, \theta)\|_{H^1}^{2}+2 \nu  \int_{0}^{t}\|\partial_2  u(s) \|_{H^1}^{2}ds + 2 \eta  \int_{0}^{t}\|\partial_1  \theta(s) \|_{H^1}^{2}ds \notag\\
&& \leq \|(u_0, \theta_0)\|_{H^1}^{2} +  \,C\, \int_0^t (\|u\|_{H^1} + \|\na \theta\|_{L^2})\, \left(\|\p_2 u\|_{H^1}^2 + \|\p_1\theta\|_{H^1}^2\right)\,d\tau \label{hh1} \\
&& \le E(0) + \,C\, E(t)^{\frac{3}{2}}. \label{h1final}
\een
We also notice that the $H^1$-estimate is actually self-contained. The upper bound in (\ref{hh1})
depends only on the $H^1$-norm level quantities. A simple consequence of (\ref{hh1}) is
that any initial small $H^1$ initial data leads to a global $H^1$ weak solution. However, we do not know the uniqueness of $H^1$-level solutions. This is one of the reasons that we are seeking 
global $H^2$-solutions. 

\vskip .1in 
In order to control the $H^2$-norm, it then suffices to bound the $L^2$-norm of $(\na \om, \Delta \theta)$. Applying $\nabla$ to the first equation of (\ref{vorticityequation}) and dotting with $\nabla \omega$, and apply $\Delta$ to the second equation of (\ref{vorticityequation}) and dotting with $\Delta \theta$,  we obtain 
\beq\label{h2energy}
\frac{1}{2}\frac{d}{dt}(\|\nabla \omega\|_{L^2}^{2} +\|\Delta \theta(t)\|_{L^2}^{2})+  \nu \|\partial_2 \nabla \omega\|_{L^2}^{2}+ \eta \|\partial_1 \Delta \theta\|_{L^2}^{2} 
= J_1 + J_2 + J_3,
\eeq
where 
\beno 
&& J_1 = \int (\nabla \partial_1 \theta \cdot \nabla \omega- \Delta u_2 \Delta \theta)\,dx, \\
&& J_2 = -\int \nabla \omega \cdot \nabla u \cdot \nabla \omega\, dx, \\ 
&& J_3 = - \int \Delta \theta \cdot \Delta (u\cdot \nabla \theta) dx.
\eeno 
First we verify that $J_1 =0$. In fact, since $u_2 = \p_1 \psi$ and $\Delta \psi = \om$, we have 
\beno 
J_1 &=& \int (\nabla \partial_1 \theta \cdot \nabla \omega- \Delta u_2 \Delta \theta)\,dx
= \int (\nabla \partial_1 \theta \cdot \nabla \omega- \Delta \p_1 \psi \Delta \theta)\,dx\\
&=& \int (\nabla \partial_1 \theta \cdot \nabla \omega-  \p_1 \om\, \Delta \theta)\,dx
 = \int (\nabla \partial_1 \theta \cdot \nabla \omega + \p_1 \na\om\, \cdot\na \theta)\,dx\\
&=& \int \p_1(\na\theta \cdot \na \om)\,dx =0. 
\eeno 
We now estimate $J_3$ and then $J_2$. The effort is still devoted to obtaining an upper bound that is time integrable
for each term.  After integration by parts, 
\beno
J_3 &=& -\int \Delta \theta\,  \Delta u_1 \, \partial_1 \theta\, dx - \int \Delta \theta \, \Delta u_2\, \partial_2 \theta\, dx\\
&&-2\int \Delta \theta\, \nabla u_1\cdot \partial_1\nabla\theta\, dx -2\int \Delta \theta \,\nabla u_2  \cdot \partial_2 \nabla \theta\, dx\\
&:=& J_{31} + J_{32} + J_{33} + J_{34}.
\eeno
By Lemma \ref{tri}, 
\ben
|J_{31}| &\le& C\, \|\p_1 \theta\|_{L^2}\, \|\Delta\theta\|_{L^2}^{\frac12}\, \|\p_1\Delta\theta\|_{L^2}^{\frac12}\, \|\Delta u_1\|_{L^2}^{\frac12}\, \|\p_2\Delta u_1\|_{L^2}^{\frac12} \notag\\
&\le& \,C\, (\|\Delta\theta\|_{L^2} + \|\Delta u_1\|_{L^2})\, 
\|\p_1 \theta\|_{H^2}^{\frac32}\, \|\p_2\Delta u_1\|_{L^2}^{\frac12}. \label{j31b}
\een
The bound on the right-hand side is time integrable. To bound $J_{32}$, we further decompose it into two terms,
\begin{equation*}
\begin{aligned}
J_{32}=&-\int \Delta \theta  \Delta u_2\partial_2 \theta dx\\
=&-\int \partial_1 \partial_1 \theta \,\Delta u_2 \,\partial_2 \theta dx -\int \partial_2 \partial_2 \theta \, \Delta u_2\, \partial_2 \theta \, dx \\
=&-\int \partial_1 \partial_1 \theta\, \Delta u_2\partial_2\, \theta \,dx +\frac{1}{2}\int \Delta  \partial_2 u_2\, (\partial_2 \theta)^2\, dx \\
=&-\int \partial_1 \partial_1 \theta\, \Delta u_2\,\partial_2 \theta\, dx -\frac{1}{2}\int \Delta  \partial_1 u_1\, (\partial_2 \theta)^2 dx \\
=&-\int \partial_1 \partial_1 \theta\, \Delta u_2\,\partial_2 \theta\, dx +\int \Delta  u_1\, \partial_2 \theta\, \partial_1 \partial_2 \theta dx.
\end{aligned}
\end{equation*}
Therefore, by Lemma \ref{tri},
\ben
|J_{32}|
&\leq &\, C\,\|\partial_1 \partial_1 \theta\|_{L^2}\|\Delta u_2\|_{L^2}^{\frac{1}{2}}\|\partial_2 \Delta  u_2\|_{L^2}^{\frac{1}{2}} \|\partial_2 \theta\|_{L^2}^{\frac{1}{2}} \|\partial_1 \partial_2 \theta\|_{L^2}^{\frac{1}{2}} \notag\\
&& +C\|\partial_1 \partial_2 \theta\|_{L^2} \|\partial_2 \theta\|_{L^2}^{\frac{1}{2}} \|\partial_1 \partial_2 \theta\|_{L^2}^{\frac{1}{2}} \|\Delta u_1\|_{L^2}^{\frac{1}{2}}\|\partial_2 \Delta  u_1\|_{L^2}^{\frac{1}{2}} \notag\\
&\leq & \,C\, (\|\p_2 \theta\|_{L^2} + \|\Delta u\|_{L^2})\, \|\p_1\na \theta\|_{L^2}^{\frac32}\,\|\p_2\Delta u\|_{L^2}^{\frac12}. \label{j32b}
\een
$J_{33}$ can be bounded as follows,
\ben
|J_{33}| &\le&\,C\, \|\p_1\na\theta\|_{L^2}\, \|\Delta\theta\|_{L^2}^{\frac12}\, \|\p_1\Delta\theta\|_{L^2}^{\frac12}\, \|\na u_1\|_{L^2}^{\frac12}\,\|\p_2\na u_1\|_{L^2}^{\frac12} \notag\\
&\le& \,C\, (\|\Delta\theta\|_{L^2} + \|\na u_1\|_{L^2})\, 
\|\p_1 \theta\|_{H^2}^{\frac32}\, \|\p_2\na u_1\|_{L^2}^{\frac12}.\label{j33b}
\een
By integration by parts, 
\begin{equation*}
\begin{aligned}
J_{34} 
& = -2 \int (\partial_1 u_2 \partial_1 \partial_2 \theta \Delta \theta+ \partial_2 u_2 \partial_2 \partial_2 \theta \Delta \theta)\, dx \\
& = -2\int \partial_1 u_2\, \partial_1\partial_2 \theta\, \Delta \theta\, dx + 2 \int \partial_1 u_1\, \partial_2 \partial_2 \theta\, \Delta \theta\, dx \\
&= -2\int \partial_1 u_2\, \partial_1\partial_2 \theta\, \Delta \theta\, dx -2 \int  u_1\, \partial_1 \partial_2 \partial_2 \theta\, \Delta \theta\, dx  -2 \int  u_1\,  \partial_2 \partial_2 \theta\, \partial_1 \Delta \theta\, dx\\
&:= J_{341} + J_{342} + J_{343}.
\end{aligned}
\end{equation*}
The terms on the right can be bounded as follows. 
\begin{equation*}
\begin{aligned}
|J_{341}|
\leq&\, C\,\|\partial_1\partial_2 \theta\|_{L^2} \|\partial_1 u_2\|_{L^2}^{\frac{1}{2}}\|\partial_2\partial_1 u_2\|_{L^2}^{\frac{1}{2}} \|\Delta \theta\|_{L^2}^{\frac{1}{2}}\|\partial_1 \Delta  \theta\|_{L^2}^{\frac{1}{2}}\\
\leq&\, C\,(\|\Delta\theta\|_{L^2} + \|\p_1 u_2\|_{L^2})\, 
\|\p_1 \theta\|_{H^2}^{\frac32}\, \|\p_2\na u_2\|_{L^2}^{\frac12},
\end{aligned}
\end{equation*}
\begin{equation*}
\begin{aligned}
|J_{342}|\leq& C\|\partial_1\partial_2 \partial_2 \theta\|_{L^2} \|\Delta \theta\|_{L^2}^{\frac{1}{2}}\|\partial_1 \Delta  \theta\|_{L^2}^{\frac{1}{2}} \| u_1\|_{L^2}^{\frac{1}{2}}\|\partial_2 u_1\|_{L^2}^{\frac{1}{2}} \\
\leq&\, C\,(\|\Delta\theta\|_{L^2} + \|u_1\|_{L^2})\, 
\|\p_1 \theta\|_{H^2}^{\frac32}\, \|\p_2 u_1\|_{L^2}^{\frac12},
\end{aligned}
\end{equation*}
\begin{equation*}
\begin{aligned}
|J_{343}|\leq& C\,\|\partial_1\Delta \theta\|_{L^2} \|\partial_2 \partial_2 \theta\|_{L^2}^{\frac{1}{2}}\|\partial_1 \partial_2 \partial_2 \theta\|_{L^2}^{\frac{1}{2}} \| u_1\|_{L^2}^{\frac{1}{2}}\|\partial_2 u_1\|_{L^2}^{\frac{1}{2}} \\
\leq& C\, (\|\Delta\theta\|_{L^2} + \|u_1\|_{L^2})\, 
\|\p_1 \theta\|_{H^2}^{\frac32}\, \|\p_2 u_1\|_{L^2}^{\frac12}.
\end{aligned}
\end{equation*}
Combining these estimates yields
\begin{equation} \label{j34b}
|J_{34}| \leq C\, (\|\theta\|_{H^2} + \|u\|_{H^2})\, \|\p_1 \theta\|_{H^2}^{\frac32}\, \|\p_2 u\|_{H^2}^{\frac12}.
\end{equation}
Putting  (\ref{j31b}), (\ref{j32b}), (\ref{j33b}) and (\ref{j34b}) together, we obtain 
\beq\label{j3b}
|J_3| \le \,C\, (\|\theta\|_{H^2} + \|u\|_{H^2})\, \|\p_1 \theta\|_{H^2}^{\frac32}\, \|\p_2 u\|_{H^2}^{\frac12}.
\eeq
We now turn to the estimate of $J_2$. As we have explained in the introduction, we need the help of the extra regularization term 
\beq\label{qqq}
\int_0^t \|\p_1 u_2\|_{L^2}^2\,d\tau.
\eeq
To make full use of the anisotropic dissipation, we further write $J_2$ as 
\begin{equation*}
\begin{aligned}
J_2 =&-\int \partial_1 u_1\,  (\partial_1\omega)^2 \, dx - \int \partial_1 u_2\,  \partial_1 \omega \, \partial_2 \omega\,  dx \\
& - \int \partial_2 u_1\, \partial_1 \omega \, \partial_2 \omega dx-\int \partial_2 u_2\,  (\partial_2\omega)^2 dx\\
=&\int \partial_2 u_2\,  (\partial_1\omega)^2 dx - \int \partial_1 u_2\,  \partial_1 \omega\,  \partial_2 \omega \, dx \\
& - \int \partial_2 u_1\,  \partial_1 \omega\,  \partial_2 \omega \, dx - \int \partial_2 u_2\,  (\partial_2\omega)^2\,  dx\\
:=& J_{21} + J_{22} + J_{23} + J_{24}. 
\end{aligned}
\end{equation*}
To bound the first two terms, we need to make use of the term in (\ref{qqq}). By integration by parts and Lemma \ref{tri},
\beno 
J_{21} &=& -2\int  u_2 \,\partial_1\omega \,\p_2\partial_{1}\omega\, dx \\
&\leq & C\|\partial_2\partial_1 \omega\|_{L^2} \|\partial_1 \omega\|_{L^2}^{\frac{1}{2}}\|\partial_2 \partial_1 \omega\|_{L^2}^{\frac{1}{2}} 
\|u_2\|_{L^2}^{\frac{1}{2}}\|\partial_1 u_2\|_{L^2}^{\frac{1}{2}} \\
&\le& \,C\, (\|u_2\|_{L^2} + \|\p_1\om\|_{L^2})\, \|\p_2\p_1\om\|_{L^2}^{\frac32}\, \|\p_1 u_2\|_{L^2}^{\frac12}.
\eeno 
By Lemma \ref{tri},
\beno
|J_{22}|
&\leq & \,C\,\|\partial_1u_2\|_{L^2} \|\partial_1 \omega\|_{L^2}^{\frac{1}{2}}\|\partial_2 \partial_1 \omega\|_{L^2}^{\frac{1}{2}} \| \partial_2 \omega\|_{L^2}^{\frac{1}{2}}\|\partial_1 \partial_2 \omega\|_{L^2}^{\frac{1}{2}} \\
&\leq & \,C\, \|\na \om\|_{L^2} \, \|\p_2\p_1\om\|_{L^2}\, \|\p_1 u_2\|_{L^2},
\eeno
\beno 
|J_{23}| &\le& C\, \|\p_2 u_1\|_{L^2}\, \|\p_1\om\|_{L^2}^{\frac12}\, \|\p_2\p_1\om\|_{L^2}^{\frac12}\, \|\p_2 \om\|_{L^2}^{\frac12}\,\|\p_1\p_2 \om\|_{L^2}^{\frac12}\\
&\le& \,C\, \|\na \om\|_{L^2} \, \|\p_2\p_1\om\|_{L^2}\, \|\p_2 u_1\|_{L^2},
\eeno 
\beno
|J_{24}| &\le& C\, \|\p_2 u_2\|_{L^2}\, \|\p_2\om\|_{L^2}^{\frac12}\, \|\p_1\p_2\om\|_{L^2}^{\frac12}\, \|\p_2 \om\|_{L^2}^{\frac12}\,\|\p_2\p_2 \om\|_{L^2}^{\frac12}\\
&\le& \,C\, \|\na \om\|_{L^2} \, \|\p_2\na\om\|_{L^2}\, \|\p_2 u_2\|_{L^2}.
\eeno
Therefore,
\beq\label{j2b}
|J_2| \le \,C\, \|u\|_{H^2}\, (\|\p_2 \na \om \|_{L^2}^2 + \|\p_1 u_2\|_{L^2}^2 + \|\p_2 u_1\|_{L^2}^2).
\eeq
Inserting $J_1=0$, (\ref{j3b}) and (\ref{j2b}) in (\ref{h2energy}), we obtain 
\ben
&& \frac{d}{dt}(\|\Delta u\|_{L^2}^{2}+\|\Delta \theta\|_{L^2}^{2}) + 2\nu \|\partial_2 \Delta u \|_{L^2}^{2}+ 2\eta  \|\partial_1 \Delta \theta \|_{L^2}^{2}\notag \\
&& \leq \,C\, (\|\theta\|_{H^2} + \|u\|_{H^2})\, \|\p_1 \theta\|_{H^2}^{\frac32}\, \|\p_2 u\|_{H^2}^{\frac12}\notag\\
&&\quad  + \,C\, \|u\|_{H^2}\, (\|\p_2 \na \om \|_{L^2}^2 + \|\p_1 u_2\|_{L^2}^2 + \|\p_2 u_1\|_{L^2}^2) .\label{honesum}
\een
Integrating  (\ref{honesum}) over the time  interval $[0, t]$  yields 
\ben
&& \|\Delta u(t)\|_{L^2}^{2}+\|\Delta \theta(t)\|_{L^2}^{2} +2 \nu  \int_{0}^{t}\|\partial_2  \Delta u \|_{L^2}^{2}d\tau + 2 \eta  \int_{0}^{t}\|\Delta \partial_1  \theta \|_{L^2}^{2}d\tau
\notag \\
&& \le \|\Delta u_0\|_{L^2}^{2}+\|\Delta \theta_0\|_{L^2}^{2} + C\, \int_0^t  (\|\theta\|_{H^2} + \|u\|_{H^2})\, \|\p_1 \theta\|_{H^2}^{\frac32}\, \|\p_2 u\|_{H^2}^{\frac12}\,d\tau
\notag \\
&&\quad + C\, \int_0^t \|u\|_{H^2}\, (\|\p_2 \na \om \|_{L^2}^2 + \|\p_1 u_2\|_{L^2}^2 + \|\p_2 u_1\|_{L^2}^2) \,d\tau \notag \\
&& \leq E(0)  + \, C\,  E(t)^{\frac{3}{2}}. \label{H2final}
\een

\vskip .1in 
The next major step is to bound the last piece in $E(t)$ defined by (\ref{ee1}), namely
$$
\int_0^t \|\p_1 u_2\|_{L^2}^2\,d\tau.
$$
We make use of the equation of $\theta$. By the equation of $\theta$,
\begin{equation}\label{p1u}
\partial_1 u_2= -\partial_t \partial_1 \theta - \partial_1 (u \cdot \nabla \theta) + \eta \partial_{111} \theta. 
\end{equation}
Multiplying (\ref{p1u}) with $\partial_1 u_2$ and then integrating over $\mathbb R^2$ yields
\beno
\|\partial_1 u_2\|_{L^2}^{2}&=& -\int \partial_t \partial_1 \theta\,  \partial_1 u_2\;dx - \int \partial_1 u_2\, \partial_1 (u \cdot \nabla \theta)\;dx + \eta \int \partial_1 u_2\,\partial_{111} \theta\;dx \\
&:=& K_1 +K_2 + K_3.
\eeno
Even though the estimate of $K_3$ appears to be easy, the term with unfavorable derivative $\p_1 u_2$ will be absorbed by the left-hand side, 
\beq\label{k1up}
|K_3|\le \,\eta \|\p_1 u_2\|_{L^2}\, \|\p_{111} \theta\|_{L^2} \le \frac12 \|\p_1 u_2\|_{L^2}^2 + C\, \|\p_1\theta\|_{H^2}^2. 
\eeq
We shift the time derivative in $K_1$, 
\beq\label{k3up}
K_1 = - \frac{d}{dt} \int \partial_1 \theta\, \partial_1 u_2\, dx+\int  \partial_1 \theta\, \partial_1 \partial_t u_2\, dx := K_{11} + K_{12}.
\eeq
Invoking the equation for the second component of the velocity, we have 
\beno
K_{12} &=& -\int \p_1\p_1\theta \, \p_t u_2\,dx \\
&=& -\int \partial_{11}  \theta  (-(u\cdot \nabla)u_2- \partial_2 p+\nu \partial_{22}u_2+ \theta)\;dx\\
&= & \int \partial_{11}  \theta\, (u\cdot \nabla) u_2\;dx\;+\int \partial_{11}  \theta \;\partial_2 p\;dx\\
&& - \nu \int \partial_{11}  \theta\; \partial_{22}u_2\;dx\; - \int \partial_{11}  \theta\; \theta\;dx.
\eeno
We further replace the pressure term. Applying the divergence operator to the velocity equation yields 
\begin{equation*}
p= -\Delta^{-1}\nabla \cdot (u\cdot \nabla u)+ \Delta^{-1} \partial_2 \theta.
\end{equation*}
Therefore, 
\beno
K_{12} &=& \int \partial_{11}  \theta\, (u\cdot \nabla) u_2\;dx\;+\int \partial_{11}  \theta\, (-\partial_2 \Delta^{-1}\nabla \cdot (u\cdot \nabla u))\;dx\\ 
&&-\nu \int \partial_{11}  \theta\; \partial_{22}u_2\;dx\; -\int \partial_{11}  \theta\; \partial_{11} \Delta^{-1}\theta\;dx\\
&:=& K_{121}+ K_{122}+ K_{123}+K_{124}.
\eeno
By the boundedness of the double Riesz transform (see, e.g., \cite{Stein}),
$$
\|\p_{11} \Delta^{-1} f\|_{L^q} \le \, C\, \|f\|_{L^q}, \qquad 1<q<\infty,
$$
we have 
$$
K_{124} = \int \p_1\theta\, \partial_{11} \Delta^{-1} \p_1 \theta\;dx \le C\, \|\p_1\theta\|_{L^2}^2. 
$$
$K_{123}$ can be easily bounded, 
$$
|K_{123}| \le C\, \|\p_{11}\theta\|_{L^2} \, \|\p_{22} u_2\|_{L^2}.
$$
By integration by parts and the boundedness of the double Riesz transform, 
\beno
K_{122} &=& -\int \partial_{1} \theta\; \partial_{12} \Delta^{-1}\nabla \cdot (u\cdot \nabla u)\;dx\\
&\leq & \|\partial_{1} \theta\|_{L^2}\; \| \Delta^{-1}\partial_{12} \nabla \cdot (u\cdot \nabla u)\|_{L^2}\\
&\leq & \,C\, \|\partial_{1} \theta\|_{L^2}\,\|\partial_{2} (u\cdot \nabla u)\|_{L^2} \\  
&\leq & \,C\, \|\partial_{1} \theta\|_{L^2}\,\|\partial_{2} u\cdot \nabla u+ u\cdot \nabla \partial_2 u\|_{L^2}  \\
&\leq & \,C\, \|\partial_{1} \theta\|_{L^2} \left(\;\|\partial_{2}u\|_{L^4} \;\| \nabla u\|_{L^4}+ \|u\|_{\infty}\|\nabla \partial_2 u\|_{L^2}\right) \\             
&\leq & \,C\,\|\partial_{1} \theta\|_{L^2} \,\|\partial_{2}u\|_{H^1}\;  \| \nabla u\|_{H^1}+
\,C\,\|\partial_{1} \theta\|_{L^2} \, \|u\|_{H^2}\|\nabla \partial_2 u\|_{L^2}.
\eeno 
To bound $K_{121}$, we further split it, 
\beno
K_{121} &=& \int \partial_{11}  \theta (u_1 \partial_1 u_2+ u_2 \partial_2 u_2) dx\\
&=&\int \partial_{11}  \theta\; u_1\; \partial_1 u_2 \;dx\;+ \int \partial_{11}\theta\; u_2\; \partial_2 u_2\; dx.
\eeno
By Lemma \ref{tri}, 
\beno
|K_{121}| &\le& \, C\, \|\partial_{11} \theta\|_{L^2}\,\|u_1\|_{L^2}^{\frac{1}{2}}\|\partial_1 u_1\|_{L^2}^{\frac{1}{2}} \|\partial_1 u_2\|_{L^2}^{\frac{1}{2}}\|\partial_2 \partial_1 u_2\|_{L^2}^{\frac{1}{2}}\\
&& + \,C\,\|u_2\|_{L^\infty}\, \|\p_{11}\theta\|_{L^2}\, \|\p_2 u_2\|_{L^2}\\
&\le& \,C\, \|u\|_{H^1}\, \|\p_2u\|_{H^1}\, \|\partial_{11} \theta\|_{L^2} +\,C\, 
\|u\|_{H^2}\, \|\p_2 u\|_{L^2}\,\|\p_{11}\theta\|_{L^2}.
\eeno
We have thus obtained an upper bound for $K_{12}$,
\beq \label{k12up}
|K_{12}| \le \,C\, \|\p_1\theta\|_{L^2}^2 + C\, \|\p_{11}\theta\|_{L^2} \, \|\p_{22} u_2\|_{L^2}
+\,C\, \|u\|_{H^2}\, \|\p_2 u\|_{H^1}\,\|\p_1\theta\|_{H^1}.
\eeq
It remains to bound $K_2$. We decompose $K_2$ into four terms,
\beno
K_2 &=&  - \int \partial_1 u_2\, \partial_1 u_1\, \partial_1  \theta\,  dx - \int \partial_1 u_2  u_1 \partial_1 \partial_1  \theta\, dx\\
&&- \int \partial_1 u_2 \partial_1 u_2 \partial_2  \theta\,  dx - \int \partial_1 u_2  u_2 \partial_1 \partial_2  \theta\, dx. 
\eeno 
By Lemma \ref{tri}, 
\ben
|K_2| &\le& \,C\,  \|\partial_1u_2\|_{L^2} \|\partial_2 u_2\|_{L^2}^{\frac{1}{2}}\|\partial_2 \partial_2 u_2\|_{L^2}^{\frac{1}{2}} \|\partial_1 \theta\|_{L^2}^{\frac{1}{2}} \|\partial_1 \partial_1 \theta\|_{L^2}^{\frac{1}{2}} \notag \\
&& + \,C\,  \|u_1\|_{L^2}^{\frac{1}{2}}\|\partial_1 u_1\|_{L^2}^{\frac{1}{2}} \|\partial_1 u_2\|_{L^2}^{\frac{1}{2}} \|\partial_2 \partial_1 u_2\|_{L^2}^{\frac{1}{2}}\|\partial_1 \partial_1 \theta\|_{L^2} \notag\\
&& +  \,C\, \|\partial_1 u_2\|_{L^2} \| \partial_1 u_2\|_{L^2}^{\frac{1}{2}}\|\partial_2 \partial_1 u_2\|_{L^2}^{\frac{1}{2}} \|\partial_2 \theta\|_{L^2}^{\frac{1}{2}} \|\partial_2 \partial_1 \theta\|_{L^2}^{\frac{1}{2}} \notag\\
&& +  \,C\,\|\partial_1 \partial_2 \theta\|_{L^2} \|u_2\|_{L^2}^{\frac{1}{2}}\|\partial_1 u_2\|_{L^2}^{\frac{1}{2}} \|\partial_1 u_2\|_{L^2}^{\frac{1}{2}} \|\partial_2 \partial_1 u_2\|_{L^2}^{\frac{1}{2}} \notag\\
&\le& \,C\, \|u\|_{H^1}\, (\|\p_2 u\|_{H^1}^2 + \|\p_1\theta\|_{H^1}^2) \notag\\
&& + \,C\,(\|u\|_{H^2} +
\,\|\theta\|_{H^2}) (\|\p_1 u_2\|_{L^2}^2 + \|\p_1\theta\|_{H^1}^2). \label{k2up}
\een 
Combining (\ref{k1up}), (\ref{k3up}), (\ref{k12up}) and (\ref{k2up}), we find 
\beno 
\frac12 \, \|\p_1 u_2\|_{L^2}^2 &\le& \,C\, \|\p_1\theta\|_{H^2}^2 -\frac{d}{dt}\int \p_1\theta\, \p_1 u_2\,dx \\
&& + C\, \|\p_{11}\theta\|_{L^2} \, \|\p_{22} u_2\|_{L^2} + \,C\, \|u\|_{H^2}\, (\|\p_2 u\|_{H^1}^2 + \|\p_1\theta\|_{H^1}^2) \\
&& + \,C\,(\|u\|_{H^2} +
\,\|\theta\|_{H^2}) (\|\p_1 u_2\|_{L^2}^2 + \|\p_1\theta\|_{H^1}^2).
\eeno 
Integrating over $[0, t]$ yields 
\ben
\int_0^t \|\p_1 u_2\|_{L^2}^2 \,d\tau &\le& \,C\, \int_0^t \|\p_1\theta\|_{H^2}^2 \,d\tau
-2\int \p_1\theta\, \p_1 u_2\,dx + 2\int \p_1\theta_0\, \p_1 u_{02}\,dx \notag \\
&& + \,C\,\int_0^t  \|\p_{11}\theta\|_{L^2} \, \|\p_{22} u_2\|_{L^2}\,d\tau \notag\\
&& + \,C\,\int_0^t  \|u\|_{H^2}\, (\|\p_2 u\|_{H^1}^2 + \|\p_1\theta\|_{H^1}^2) \,d\tau \notag\\
&& + \,C\,\int_0^t (\|u\|_{H^2} +
\,\|\theta\|_{H^2}) (\|\p_1 u_2\|_{L^2}^2 + \|\p_1\theta\|_{H^1}^2)\,d\tau\notag\\
&\le& \,C\, \int_0^t \|\p_1\theta\|_{H^2}^2 \,d\tau + \,C\, \int_0^t \|\p_2 u\|_{H^2}^2\,d\tau
 + \,C\, (\|u\|_{H^1}^2 + \|\theta\|_{H^1}^2)\notag \\
&& + \,C\, (\|u_0\|_{H^1}^2 + \|\theta_0\|_{H^1}^2) + C\, E(t)^{\frac32}. \label{defb}
\een
We then combine the $H^1$-bound in (\ref{h1final}), the homogeneous $H^2$-bound in (\ref{H2final})
and the bound for the extra regularization term in (\ref{defb}). We need to eliminate 
the quadratic terms on the right-hand side of (\ref{defb}) by the corresponding terms 
on the left-hand side, so we need to multiply both sides of (\ref{defb}) by a suitable small coefficient
$\delta$. $(\ref{h1final}) + (\ref{H2final}) + \delta\,(\ref{defb})$ gives
\ben
&& \|u(t)\|_{H^2}^{2} +\|\theta(t)\|_{H^2}^{2} + 2 \nu \int_{0}^{t} \|\partial_2 u\|_{H^2}^{2}d\tau  
+ 2\eta \int_{0}^t \|\partial_1 \theta\|_{H^2}^{2} d\tau+ \delta \int_{0}^{t}\|\partial_1 u_2\|_{L^2}^{2} \notag \\
&&\leq \,  E(0) + \,C\,E(t)^{\frac{3}{2}}\;+\, C\,\delta\, (\|u(t)\|_{H^{2}}^{2} +\|\theta(t)\|_{H^{2}}^{2}) + \,C\,\delta\, (\|u_0\|_{H^{2}}^{2} + \|\theta_0\|_{H^{2}}^{2}) \notag\\
&&\quad  + \,C\,\delta \int_{0}^{t}\|\partial_2 u \|_{H^2}^{2} d\tau 
+ \,C\,\delta\int_{0}^{t}\|\partial_1 \theta  \|_{H^2}^{2} d\tau 
+ \,C\,\delta\, E(t)^{\frac{3}{2}}. \label{gir}
\een
If  $\delta>0$ is chosen to be sufficiently small, say 
$$
C\, \delta \le \frac12, \quad C\, \delta \le \nu, \quad C\, \delta \le \eta,
$$
then (\ref{gir}) is reduced to 
\begin{equation}\label{ggd}
E(t)\leq \,C_1\,  E(0)+ \, C_2\, E(t)^{\frac{3}{2}},
\end{equation}
where $C_1$ and $C_2$ are positive constants. An application of the bootstrapping argument to 
(\ref{ggd}) then leads to the desired stability result. In fact, if the initial data $(u_0, \theta_0)$ is sufficiently small, 
$$
\|(u_0, \theta_0)\|_{H^2} \le \varepsilon := \frac1{4 \sqrt{C_1}{C_2}},
$$
then (\ref{ggd}) allows us to show that 
$$
\|(u(t), \theta(t))\|_{H^2} \le \sqrt{2 C_1}\, \varepsilon.
$$
The bootstrapping argument starts with the ansatz that, for $t<T$
\beq\label{lov1}
E(t) \le \frac1{4 C_2^2}
\eeq
and show that
\beq\label{lov}
E(t) \le \frac1{8 C_2^2} \quad\mbox{for all $t\le T$}.
\eeq
Then the bootstrapping argument would imply that $T=\infty$ and (\ref{lov}) actually holds for all $t$. (\ref{lov}) is an easy consequence of (\ref{ggd}) and (\ref{lov1}). Inserting (\ref{lov1}) in (\ref{ggd}) yields 
\beno
E(t) &\leq& \,C_1\,  E(0)+ \, C_2\, E(t)^{\frac{3}{2}} 
\\
&\le& C_1 \, \varepsilon^2 + \, C_2\, \frac1{2C_2}\, E(t).
\eeno
That is, 
$$
\frac12 E(t) \le C_1 \, \varepsilon^2\quad\mbox{or}\quad E(t) \le 2\, C_1 \, \frac{1}{16 C_1 C_2^2} = \frac{1}{8 C_2^2} = 2C_1\, \epsilon^2,
$$
which is (\ref{lov}). This establishes the global stability.

\vskip .1in 
Finally we briefly explain the uniqueness. It is not difficult to see that the solutions to (\ref{bb1}) at this regularity level must be unique. Assume 
that $(u^{(1)}, p^{(1)}, \theta^{(1)})$ and $(u^{(2)}, p^{(2)}, \theta^{(2)})$ are two solutions of (\ref{bb1}) with one of them in the $H^2$-regularity class say $(u^{(1)}, \theta^{(1)})\in L^{\infty}(0, T; H^2) $.  The difference $(\widetilde{u}, \widetilde{p}, \widetilde{\theta})$ with 
$$
\widetilde{u}=u^{(2)}- u^{(1)},\quad \widetilde{p}=p^{(2)}- p^{(1)} \quad \text{and} \quad \widetilde{\theta}=\theta^{(2)}- \theta^{(1)} 
$$
satisfies 
\begin{equation}\label{uniqueness}
\begin{aligned}
&\partial_t \widetilde{u}+u^{(2)}\cdot \nabla \widetilde{u}+ \widetilde{u} \cdot \nabla u^{(1)}+\nabla \widetilde{p}= \nu \partial_{22}\widetilde{u}+ \widetilde{\theta}{\mathbf e}_2, \\
&\partial_t \widetilde{\theta} + u^{(2)} \cdot \nabla \widetilde{\theta} + \widetilde{u}\cdot \nabla \theta^{(1)} + \widetilde{u_2}\;= \eta \partial_{11} \widetilde{\theta}, \\
&\nabla \cdot \widetilde{u}=0,\\
&\widetilde u(x,0) =0, \quad \widetilde\theta(x,0) =0.
\end{aligned}
\end{equation}
We estimate the difference $(\widetilde{u}, \widetilde{p}, \widetilde{\theta})$ in $L^{2}(\mathbb{R}^2)$. Dotting (\ref{uniqueness}) by $(\widetilde{u}, \widetilde{\theta})$ and applying the divergence free condition, we find
$$
\frac{1}{2}\frac{d}{dt}\|(\widetilde{u}, \widetilde{\theta})\|_{L^2}^{2}+ \nu \|\partial_{2}\widetilde{u}\|_{L^2}^{2}+ \eta \|\partial_{1}\widetilde{\theta}\|_{L^2}^{2}= -\int \widetilde{u} \cdot \nabla u^{(1)}\cdot \widetilde{u}\; dx - \int \widetilde{u} \cdot \nabla \theta^{(1)}\cdot \widetilde{\theta}\; dx.
$$
By Lemma \ref{tri}, Young's inequality and the uniformly global bound for $\|(u^{(1)}, \theta^{(1)})\|_{H^2}$, we have
\beno
&& \frac{1}{2}\frac{d}{dt}\|(\widetilde{u}, \widetilde{\theta})\|_{L^2}^{2}+ \nu \|\partial_{2}\widetilde{u}\|_{L^2}^{2}+ \eta \|\partial_{1}\widetilde{\theta}\|_{L^2}^{2} \\
&&  \leq C\, \|\widetilde{u}\|_{L^2}\|\widetilde{u}\|_{L^2}^{\frac{1}{2}}\|\partial_2 \widetilde{u}\|_{L^2}^{\frac{1}{2}}\|\nabla u^{(1)}\|_{L^2}^{\frac{1}{2}}\|\partial_1 \nabla u^{(1)}\|_{L^2}^{\frac{1}{2}}\\
&& + \,C\, \|\widetilde{\theta}\|_{L^2}\|\widetilde{u}\|_{L^2}^{\frac{1}{2}}\|\partial_2 \widetilde{u}\|_{L^2}^{\frac{1}{2}}\|\nabla \theta^{(1)}\|_{L^2}^{\frac{1}{2}}\|\partial_1 \nabla \theta^{(1)}\|_{L^2}^{\frac{1}{2}}\\
&& \leq C\, \|\widetilde{u}\|_{L^2}^{\frac{3}{2}}\|\partial_2 \widetilde{u}\|_{L^2}^{\frac{1}{2}}+ C\, \|\widetilde{u}\|_{L^2}^{\frac{1}{2}}\|\partial_2 \widetilde{u}\|_{L^2}^{\frac{1}{2}}\|\widetilde{\theta}\|_{L^2}\\
&&\leq \frac{\nu}{2}  \|\p_2 \widetilde{u}\|_{L^2}^{2}+ C\,\|(\widetilde{u}, \widetilde{\theta})\|_{L^{2}}^{2}.
\eeno
It then follows from Gronwall's inequality that
$$
\|\widetilde{u}(t)\|_{L^2}= \|\widetilde{\theta}(t)\|_{L^{2}} =0.
$$
That is, these two solutions coincide.
This completes the proof of Theorem \ref{main3}. 
\end{proof}

\vskip .3in
\section*{\bf Acknowledgments}

This work was partially supported by the National Science Foundation of USA under grant DMS 1624146. Wu was partially supported the AT\&T Foundation at Oklahoma State University.

\vskip .3in


\begin{thebibliography}{99}
	
	\bibitem{ACSW} D. Adhikari, C. Cao, H. Shang, J. Wu, X. Xu and Z. Ye, Global regularity results for the 2D Boussinesq equations with partial dissipation, {\it J. Differential Equations \bf 260} (2016),  1893--1917.
	
	\bibitem{ACW10} D. Adhikari, C.  Cao and  J. Wu, The 2D Boussinesq equations with vertical viscosity and vertical diffusivity, {\it J. Differential Equations \bf 249} (2010), 1078--1088.
	
	\bibitem{ACW11} D. Adhikari, C.  Cao and  J. Wu,  Global regularity results for the 2D Boussinesq equations with vertical dissipation, {\it J. Differential Equations \bf 251} (2011), 1637--1655.
	
	\bibitem{ACWX} D. Adhikari, C. Cao, J. Wu and X. Xu, Small global solutions to the damped two-dimensional Boussinesq equations, {\it J. Differential Equations \bf 256} (2014), 3594--3613.
	
	\bibitem{Nicki} N. Boardman, R. Ji, H. Qiu and J. Wu, Global existence and uniqueness of weak solutions to the Boussinesq equations without thermal diffusion, {\it Comm. Math. Sci. \bf 17} (2019), 1595--1624.
	
	
	\bibitem{caowu2011}
	\newblock C. Cao and J. Wu, 
	\newblock Global regularity for the 2D MHD equations with mixed partial dissipation and magnetic diffusion, 
	\emph{Adv. Math.} \textbf{226} (2011), 1803--1822.
	
	
	\bibitem{CaoWu1} C. Cao and J. Wu, Global regularity for the 2D anisotropic Boussinesq equations with vertical dissipation, {\it Arch. Ration. Mech. Anal. \bf 208} (2013),  985--1004.
	
	\bibitem{CCL} A. Castro, D. C\'ordoba and D. Lear, On the asymptotic stability of stratified solutions for the 2D Boussinesq equations with a velocity damping term, \emph{Math. Models Methods Appl. Sci.} \textbf{29} (2019), 1227--1277.
	
	\bibitem{Ch} D. Chae, Global regularity for the 2D Boussinesq equations with partial viscosity terms, {\em Adv. Math.} {\bf 203} (2006), 497--513.
	
	\bibitem{ChConWu} D. Chae, P. Constantin and J. Wu, An incompressible 2D didactic model with singularity and explicit solutions of the 2D Boussinesq equations, {\it J. Math. Fluid Mech. \bf 16} (2014),  473--480.
	
	\bibitem{Chae-Nam1997}	
	\newblock D. Chae and H. Nam, 
	\newblock Local existence and blow-up criterion for the Boussinesq equations, \newblock \emph{Proc. Roy. Soc. Edinburgh Sect. A} \textbf{127} (1997), 935-946.
	
	\bibitem{ChaeWu} D. Chae and J. Wu, The 2D Boussinesq equations with logarithmically supercritical velocities, {\it Adv. Math. \bf 230} (2012),  1618-1645.
	
	\bibitem{CKY} K. Choi, A. Kiselev and Y. Yao, Finite time blow up for a 1D model of 2D Boussinesq system, {\it Comm. Math. Phys. \bf 334} (2015), 1667--1679.
	
	
	\bibitem{ConD} P. Constantin and C. Doering, Heat transfer in convective turbulence, {\it Nonlinearity \bf 9} (1996), 1049--1060.
	
	\bibitem{ConVWu}P. Constantin, V. Vicol and J. Wu, Analyticity of Lagrangian trajectories for well posed inviscid incompressible fluid models, {\it Adv. Math. \bf 285} (2015), 352--393.
	
	\bibitem{Dai} Y. Dai, W. Hu, J. Wu and B. Xiao, 
	The Littlewood-Paley decomposition for periodic functions and applications to the Boussinesq equations,  {\it Anal. Appl.}, accepted for publication. 
	
	\bibitem{DP2} R. Danchin and M. Paicu, Global well-posedness issues for the inviscid Boussinesq system with Yudovich's type data, {\it Comm. Math. Phys. \bf 290} (2009),  1--14.
	
	\bibitem{DP3} R. Danchin and M. Paicu, Global existence results for the anisotropic Boussinesq system in dimension two, {\it Math. Models Methods Appl. Sci. \bf 21} (2011), 421--457.
	
	\bibitem{DWZ} W. Deng, J. Wu and P. Zhang, Stability of Couette flow for 2D Boussinesq system with vertical dissipation, arXiv: 2004.09292v1\,[math.AP] Apr 20 20202.
	
	
	\bibitem{Den} S. Denisov, Double-exponential growth of the vorticity gradient for the two-dimensional Euler equation,
	{\it Proc. Amer. Math. Soc. \bf 143} (2015), 1199--1210.
	
	\bibitem{DoeringG} C.R. Doering and J. Gibbon,  \textit{Applied analysis of the Navier-Stokes equations},  Cambridge Texts in Applied Mathematics, Cambridge University Press, Cambridge, 1995.
	
	\bibitem{DWZZ} C.R. Doering, J. Wu, K. Zhao and X. Zheng, Long time behavior of the two-dimensional Boussinesq equations
	without buoyancy diffusion, {\it Physica D \bf 376/377} (2018), 144--159.
	
	\bibitem{ElJ} T.M. Elgindi and I.J. Jeong, Finite-time singularity formation for strong solutions to the Boussinesq system,  arXiv:1708.02724v5 [math.AP] 26 Feb 2018.
	
	\bibitem{Tarek2}T.M. Elgindi and K.  Widmayer, Sharp decay estimates for an anisotropic linear semigroup and applications to the surface quasi-geostrophic and inviscid Boussinesq systems, {\it SIAM J. Math. Anal. \bf 47} (2015),  4672--4684.
	
%	\bibitem{Grafakos}
%	\newblock  L. Grafakos, 
%	\newblock Modern Fourier Analysis.
%	\newblock Third edition, Graduate Texts in Mathematics, vol. 250, Springer, New York, 2014.
	
	\bibitem{LBHe} L. He, Smoothing estimates of 2d incompressible Navier-Stokes equations in bounded domains with applications, {\it J. Func. Anal. \bf 262} (2012), 3430-3464.
	
	
	\bibitem{HKR1} T. Hmidi, S. Keraani and F. Rousset,    Global well-posedness for a Boussinesq-Navier-Stokes system with critical dissipation, {\it J. Differential Equations \bf 249} (2010), 2147--2174.
	
	\bibitem{HKR2} T. Hmidi, S. Keraani and F. Rousset, Global well-posedness for Euler-Boussinesq system with critical dissipation, {\it Comm. Partial Differential Equations \bf 36} (2011), 420--445.
	
	\bibitem{HL} T. Hou and C. Li, Global well-posedness of the viscous Boussinesq equations, {\em Discrete and Cont. Dyn. Syst.-Ser. A} {\bf 12} (2005), 1--12.
	
	\bibitem{Hu}
	\newblock W. Hu, I. Kukavica and M. Ziane, 
	\newblock Persistence of regularity for a viscous Boussinesq
	equations with zero diffusivity, 
	\newblock  \emph{Asymptot. Anal.} \textbf{91 (2)} (2015), 111--124.
	
	\bibitem{HuWangWu} W. Hu, Y. Wang, J. Wu, B. Xiao and J. Yuan, Partially dissipated 2D Boussinesq equations with Navier type boundary conditions, {\it Physica D \bf 376/377} (2018), 39--48.
	
	\bibitem{JMWZ} Q. Jiu, C. Miao, J. Wu and Z. Zhang, The 2D incompressible Boussinesq equations with general critical dissipation, {\it SIAM J. Math. Anal. \bf 46} (2014), 3426--3454.
	
	\bibitem{JWYang} Q. Jiu, J. Wu, and W. Yang, Eventual regularity of the two-dimensional Boussinesq equations with supercritical dissipation, {\it J. Nonlinear Science \bf 25} (2015),  37--58.
	
	\bibitem{KRTW} D. KC, D. Regmi, L. Tao and J. Wu, The 2D Euler-Boussinesq equations with a singular velocity, {\it J. Differential Equations \bf 257} (2014), 82--108.
	
	\bibitem{Kis} A. Kiselev and V. Sverak, Small scale creation for solutions of the incompressible two-dimensional Euler equation, {\it Ann. Math. \bf 180} (2014),  1205--1220.
	
	\bibitem{KiTan} A. Kiselev and C.  Tan, Finite time blow up in the hyperbolic 
	Boussinesq system, {\it Adv. Math. \bf 325} (2018), 34--55.
	
	\bibitem{LaiPan} M. Lai, R. Pan and K. Zhao, Initial boundary value problem for two-dimensional viscous Boussinesq equations, {\it Arch. Ration. Mech. Anal. \bf 199} (2011),  739--760.
	
	\bibitem{LLT} A. Larios, E. Lunasin and E.S. Titi, Global well-posedness for the 2D Boussinesq system with anisotropic viscosity and without heat diffusion, {\it J. Differential Equations \bf 255} (2013), 2636--2654.
	
	\bibitem{LSWu}J. Li, H. Shang, J. Wu, X. Xu and Z. Ye, Regularity criteria for the 2D Boussinesq equations with supercritical dissipation, {\it Comm. Math. Sci. \bf 14} (2016), 1999--2022.
	
	\bibitem{LiTiti} J. Li and E.S. Titi,  Global well-posedness of the 2D Boussinesq equations 
	with vertical dissipation, {\it Arch. Ration. Mech. Anal. \bf 220} (2016),  983-1001.
	
	\bibitem{Maj} A. Majda, {\it Introduction to PDEs and Waves for the Atmosphere and Ocean}, Courant Lecture Notes {\bf 9}, Courant Institute of Mathematical Sciences and American Mathematical Society, 2003.
	
	\bibitem{MaBe}A. Majda and A. Bertozzi,  {\it Vorticity and Incompressible Flow}, Cambridge University Press, 2002.
	
	\bibitem{MX} C. Miao and L. Xue, On the global well-posedness of a class of Boussinesq- Navier-Stokes systems,  {\it NoDEA Nonlinear Differential Equations Appl. \bf 18} (2011), 707--735.
	
	\bibitem{Pe}  J. Pedlosky, {\it Geophysical fluid dynamics}, Springer, New York,
	1987.
	
	\bibitem{SaWu} A. Sarria and J. Wu, Blowup in stagnation-point form solutions of the inviscid 2d Boussinesq equations, {\it J. Differential Equations \bf 259} (2015),  3559--3576.
	
	\bibitem{Sch0} M. Schonbek,  $L^2$ decay for weak solutions of the Navier-Stokes equations, {\it Arch. Ration. Mech. Anal. \bf 88} (1985),  209--222.
	
	\bibitem{Sch} M. Schonbek and M. Wiegner,  On the decay of higher-order norms of the solutions of Navier-Stokes equations, {\it  Proc. Roy. Soc. Edinburgh Sect. A \bf 126} (1996), 677--685.
	
	
	\bibitem{SteWu} A. Stefanov and J. Wu, A global regularity result for the 2D Boussinesq equations with critical dissipation, {\it J. d'Analyse Math. \bf 137} (2019), 269--290.
	
	\bibitem{Stein}
	\newblock E.M. Stein, 
	\newblock Singular Integrals and Differentiability Properties of Functions,  Princeton Univ. Press, Princeton, New Jersey, 1970.
	
	\bibitem{TW} L. Tao and J.  Wu, The 2D Boussinesq equations with vertical dissipation and linear stability of shear flows, {\it J. Differential Equations \bf 267} (2019),  1731-1747.
	
	\bibitem{TWZZ}   L. Tao, J. Wu,  K. Zhao and  X. Zheng, 
	Stability near hydrostatic equilibrium to the 2D Boussinesq equations without thermal diffusion,   {\it Arch. Ration. Mech. Anal.}, accepted for publication. https://doi.org/10.1007/s00205-020-01515-5.
	
	\bibitem{Tao2006}
	\newblock T. Tao, 
	\newblock Nonlinear Dispersive Equations: Local and Global Analysis. CBMS Regional Conference Series in Mathematics, 106, Amercian Mathematical Society, Providence, RI: 2006.
	
	\bibitem{Wan} R. Wan, Global well-posedness for the 2D Boussinesq equations with a velocity damping term,  arXiv:1708.02695v3.
	
	\bibitem{Wen}
	\newblock  B. Wen, N. Dianati, E. Lunasin, G.P. Chini, C.R. Doering,
	\newblock New upper bounds and reduced dynamical modeling for
	Rayleigh–B\'{e}nard convection in a fluid saturated porous layer.
	\newblock \emph{Commun. Nonlinear Sci. Numer. Simul.,}  \textbf{17(5)} (2012), 2191-2199.
	
	\bibitem{Wu} J. Wu, The 2D Boussinesq equations with partial or fractional dissipation, Lectures on the analysis of nonlinear partial differential equations, Morningside Lectures in Mathematics, Part 4, p. 223-269, International Press, Somerville, MA, 2016.
	
	\bibitem{Wu01} J. Wu, Dissipative quasi-geostrophic equations with $L^p$ data, {\it Electron J. Differential Equations} {\bf  2001} (2001),  1-13.
	
	\bibitem{Wu_Xu} J. Wu and X. Xu, Well-posedness and inviscid limits of the Boussinesq equations with fractional Laplacian dissipation,  {\it Nonlinearity \bf 27} (2014), 2215--2232.
	
	\bibitem{Wu_Xu_Xue_Ye} J. Wu, X. Xu, L. Xue and Z. Ye, Regularity
	results for the 2d Boussinesq equations with critical and supercritical dissipation, {\it Comm. Math. Sci. \bf 14} (2016), 1963-1997.
	
	\bibitem{Wu_Xu_Ye} J. Wu, X. Xu and Z. Ye, The 2D Boussinesq equations with fractional horizontal dissipation and thermal diffusion, {\it Journal de Math. Pures et Appl. \bf 115} (2018), 187--217.
	
	\bibitem{Xu0} X. Xu, Global regularity of solutions of 2D Boussinesq equations with fractional diffusion, {\it Nonlinear Anal. \bf 72} (2010),  677-681. 
	
	\bibitem{YangJW} W. Yang, Q. Jiu and J. Wu, Global well-posedness for a class of 2D Boussinesq systems with fractional dissipation, {\it J. Differential Equations \bf 257} (2014),  4188--4213.
	
	\bibitem{YangJW2} W. Yang, Q. Jiu and J. Wu, The 3D incompressible Boussinesq equations with  fractional partial dissipation, {\it Comm. Math. Sci. \bf 16} (2018), No.3, 617--633.
	
	\bibitem{YeXu} Z. Ye and X. Xu, Global well-posedness of the 2D Boussinesq equations with fractional Laplacian dissipation, {\it J. Differential Equations \bf 260} (2016),  6716--6744.
	
	\bibitem{Zhao2010}
	\newblock K. Zhao, 
	\newblock 2D inviscid heat conductive Boussinesq system in a bounded domain,
	\newblock \emph{Michigan Math. J.} \textbf{59} (2010), 329-352.
	
	\bibitem{Zil} C. Zillenger, On enhanced dissipation for the Boussinesq equations, arXiv: 2004.08125v1\, [math.AP] 17 Apr 2020. 
	
	\bibitem{Zla}
	A. Zlatos, \emph{Exponential growth of the vorticity gradient for the Euler equation on the torus},  Adv. Math. {\bf 268} (2015), 396-403.

\end{thebibliography}
\end{document}